\def\@settitle{%
  \vspace*{-20pt}
  \begin{flushleft}%
    \baselineskip14\p@\relax
    \normalfont\bfseries\LARGE
    \@title
  \end{flushleft}%
}
\def\@setauthors{%
  \begingroup
  \def\thanks{\protect\thanks@warning}%
  \trivlist
  \raggedright
  \large \@topsep30\p@\relax
  \advance\@topsep by -\baselineskip
  \item\relax
  \author@andify\authors
  \def\\{\protect\linebreak}%
  \authors
  \ifx\@empty\contribs
  \else
    ,\penalty-3 \space \@setcontribs
    \@closetoccontribs
  \fi
  \normalfont
  \@setaddresses
  \endtrivlist
  \endgroup
}
\def\@setaddresses{\par
  \nobreak \begingroup
  \small
  \def\author##1{\nobreak\addvspace\smallskipamount}%
  \def\\{\unskip, \ignorespaces}%
  \interlinepenalty\@M
  \def\address##1##2{\begingroup
    \par\addvspace\bigskipamount\noindent
    \@ifnotempty{##1}{(\ignorespaces##1\unskip) }%
    {\ignorespaces##2}\par\endgroup}%
  \def\curraddr##1##2{\begingroup
    \@ifnotempty{##2}{\nobreak\noindent\curraddrname
      \@ifnotempty{##1}{, \ignorespaces##1\unskip}\/:\space
      ##2\par}\endgroup}%
  \def\email##1##2{\begingroup
    \@ifnotempty{##2}{\nobreak\noindent E-mail address%
      \@ifnotempty{##1}{, \ignorespaces##1\unskip}\/:\space
      \ttfamily##2\par}\endgroup}%
  \def\urladdr##1##2{\begingroup
    \def~{\char`\~}%
    \@ifnotempty{##2}{\nobreak\noindent\urladdrname
      \@ifnotempty{##1}{, \ignorespaces##1\unskip}\/:\space
      \ttfamily##2\par}\endgroup}%
  \addresses
  \endgroup
  \global\let\addresses=\@empty
}
\def\@setabstracta{%
    \ifvoid\abstractbox
  \else
    \skip@25\p@ \advance\skip@-\lastskip
    \advance\skip@-\baselineskip \vskip\skip@
    \box\abstractbox
    \prevdepth\z@ 
    \vskip-10pt
  \fi
}
\renewenvironment{abstract}{%
  \ifx\maketitle\relax
    \ClassWarning{\@classname}{Abstract should precede
      \protect\maketitle\space in AMS document classes; reported}%
  \fi
  \global\setbox\abstractbox=\vtop \bgroup
    \normalfont\small
    \list{}{\labelwidth\z@
      \leftmargin0pc \rightmargin\leftmargin
      \listparindent\normalparindent \itemindent\z@
      \parsep\z@ \@plus\p@
      
    }%
    \item[\hskip\labelsep\bfseries\abstractname.]%
}{%
  \endlist\egroup
  \ifx\@setabstract\relax \@setabstracta \fi
}
\def\section{\@startsection{section}{1}%
  \z@{-1.2\linespacing\@plus-.5\linespacing}{.8\linespacing}%
  {\normalfont\bfseries\Large}}
\def\subsection{\@startsection{subsection}{2}%
  \z@{-.8\linespacing\@plus-.3\linespacing}{.3\linespacing\@plus.2\linespacing}%
  {\normalfont\bfseries}}
\def\subsubsection{\@startsection{subsection}{3}%
  \z@{.7\linespacing\@plus.2\linespacing}{-1.5ex}%
  {\normalfont\itshape}}
\def\@secnumfont{\bfseries}
\def\to{\mathchoice{\longrightarrow}{\rightarrow}{\rightarrow}{\rightarrow}}
\newcommand{\shortxra}[2][]{\ext@arrow 0359\rightarrowfill@{#1}{#2}}
\def\longrightarrowfill@{\arrowfill@\relbar\relbar\longrightarrow}
\newcommand{\longxra}[2][]{\ext@arrow 0359\longrightarrowfill@{#1}{#2}}
\def\otimesover#1{\mathbin{\mathop{\otimes}_{#1}}}
\def\Nopagebreak{\@nobreaktrue\nopagebreak}
\theoremstyle{plain}
\newtheorem{maintheorem}{Theorem}
\newtheorem{theorem}{Theorem}[section]
\newtheorem{lemma}[theorem]{Lemma}
\theoremstyle{definition}
\newtheorem{definition}[theorem]{Definition}
\def\Z{\mathbb{Z}}
\def\Q{\mathbb{Q}}
\def\G{\Gamma}
\def\cP{\mathcal{P}}
\def\a{\alpha}
\def\Ker{\operatorname{Ker}}
\def\Im{\operatorname{Im}}
\def\Bl{B\ell}
\def\lk{\operatorname{lk}}
\def\Arf{\operatorname{Arf}}
\def\rhot{\rho^{(2)}}
\let\oldsharp=\# \def\#{\mathbin{\oldsharp}}
\def\emptystr{}
\newcommand{\mkc}[2][]{\begin{color}{red}#2%
  \def\tempstr{#1}%
  \ifx\tempstr\emptystr \else\textsf{\SMALL\ \raise.7ex\hbox{[\tempstr]}}\fi
\end{color}}
\begin{document}

\title%
[Concordance of links with identical Alexander invariants]
{Concordance of links with identical Alexander invariants}

\author{Jae Choon Cha}
\address{
  Department of Mathematics\\
  POSTECH\\
  Pohang 790--784\\
  Republic of Korea\\
  and\linebreak
  School of Mathematics\\
  Korea Institute for Advanced Study \\
  Seoul 130--722\\
  Republic of Korea
}
\email{jccha@postech.ac.kr}

\author{Stefan Friedl}
\address{
  Mathematisches Institut\\
  Universit\"at zu K\"oln\\
  50931 K\"oln\\
  Germany}
\email{sfriedl@gmail.com}

\author{Mark Powell}
\address{
  Department of Mathematics\\
  Indiana University \\
  Bloomington, IN 47405\\
  USA
}
\email{macp@indiana.edu}

\def\subjclassname{\textup{2010} Mathematics Subject Classification}
\expandafter\let\csname subjclassname@1991\endcsname=\subjclassname
\expandafter\let\csname subjclassname@2000\endcsname=\subjclassname
\subjclass{%
  57M25, 
  57M27, 
  57N70
}


\begin{abstract}
  J. Davis showed that the topological concordance class of a link in
  the 3-sphere is uniquely determined by its Alexander polynomial for
  2-component links with Alexander polynomial one.  A similar result
  for knots with Alexander polynomial one was shown earlier by
  M.~Freedman.  We prove that these two cases are the only exceptional
  cases, by showing that the link concordance class is not determined
  by the Alexander invariants in any other case.
\end{abstract}

\maketitle

\section{Introduction}

J. Davis proved that if a 2-component link $L$ has the Alexander
polynomial of the Hopf link, namely $\Delta_L=1$, then $L$ is
topologically concordant to the Hopf link~\cite{Davis:2006-1}.  In
other words, for 2-components links, the topological concordance class
is determined by the Alexander polynomial $\Delta_L$ when
$\Delta_L=1$.  A natural question arises from this: \emph{for which
  links does the Alexander polynomial determine the topological
  concordance class?}

The answer for knots is already known.  A well-known result of
M.~Freedman (see \cite{Freedman:1984-1},
\cite[11.7B]{Freedman-Quinn:1990-1}) says that it holds for Alexander
polynomial one knots, and T.~Kim~\cite{Kim:2005-1} (extending earlier
work of C.~Livingston~\cite{Livingston:2002-1}) showed that it does
not hold for any Alexander polynomial which is not one.

The following main result of this note says that the results of
M.~Freedman and J.~Davis are the only cases for which the topological
link concordance class is determined by the Alexander polynomial.

\begin{maintheorem}
  \label{maintheorem:simple}
  Suppose $L$ is an $m$-component link, $m\ge 2$, and suppose
  $\Delta_L\ne 1$ if $m=2$.  Then there are infinitely many links
  $L=L_0,L_1,L_2,\ldots$ which have the same  Alexander polynomial
  but are mutually not topologically concordant.
\end{maintheorem}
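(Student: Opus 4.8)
The strategy is to construct the family $\{L_i\}$ by starting with the given link $L$ and modifying it by infection (satellite) operations along null-homologous curves in the link exterior, using judiciously chosen knots whose Arf invariant or higher-order obstructions can be varied. First I would recall that an infection of $L$ along a curve $\eta \subset S^3 \setminus L$ that is null-homologous in the link exterior does not change the Alexander module, hence does not change $\Delta_L$; this is a standard computation with the Mayer–Vietoris sequence for the infection decomposition. So any link obtained this way automatically has the same Alexander polynomial, and the content is entirely in distinguishing the concordance classes.

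To detect that the $L_i$ are mutually non-concordant, I would use $L^2$-signature obstructions (Cheeger–Gromov $\rho^{(2)}$-invariants), in the spirit of Cochran–Orr–Teichner and the subsequent work on links. One chooses the infection curve $\eta$ to lie deep in the derived series of the link group — concretely, $\eta$ should map trivially to the abelianization but be nontrivial in an appropriate metabelian (or higher) quotient — so that the effect of the infection is invisible to Alexander invariants but visible to a metabelian $\rho^{(2)}$-invariant. Using a knot $K$ with large $\rho^{(2)}(S^3_0(K))$, the infection $L_i := L(\eta, \#^i K)$ has $\rho^{(2)}$-invariant growing (roughly linearly) in $i$; standard additivity and the $L^2$-signature bound for a concordance (the von Neumann $\rho$-invariant of the boundary of a 4-manifold over which the coefficient system extends vanishes, after controlling the dimension of the relevant homology) then shows $L_i$ and $L_j$ are not concordant for $i \ne j$.

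The one place the hypotheses genuinely enter is in producing the infection curve $\eta$ with the right properties, and this is where the dichotomy "$\Delta_L \neq 1$ if $m = 2$" must be used. For $m \geq 3$, or for $m = 2$ with $\Delta_L \neq 1$, I would argue that the link exterior has enough homology in its metabelian (Alexander) cover to find a curve $\eta$ that is null-homologous in $S^3 \setminus L$ yet nontrivial and "unknotted" in $S^3$ (so the infection is genuinely a satellite construction) and that links nontrivially with the first-derived cover — when $\Delta_L \neq 1$ this is guaranteed because the Alexander module is nontrivial; when $m \geq 3$ the multivariable structure forces nontrivial lower-central or derived quotients even if some one-variable Alexander polynomial degenerates. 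I would also need to arrange that the chosen metabelian representation of $\pi_1$ of a putative concordance complement has controlled $L^2$-Betti numbers so the signature estimate applies; this is handled by the usual rank-counting for metabelian coefficient systems. I expect the \textbf{main obstacle} to be exactly this case analysis: verifying, uniformly in all the allowed cases, that a suitable $\eta$ exists and that the resulting $\rho^{(2)}$-invariants are both computable and unbounded — in particular ruling out the degenerate possibility that every available infection curve is already null-homotopic (which would make the construction vacuous) precisely when $m=2$ and $\Delta_L = 1$.
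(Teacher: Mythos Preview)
Your overall architecture matches the paper's: construct $L_i$ by infection along an unknotted, null-homologous curve in $E_L$ (so the Alexander module and hence $\Delta_L$ are preserved), then separate the $L_i$ using Cheeger--Gromov $\rho^{(2)}$-invariants via the Amenable Signature Theorem, with the infection knots chosen to have large (signature-)$\rho$-invariant. You also correctly flag the case analysis as the crux.

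However, your proposed case division has a genuine gap. You suggest that for $m\ge 3$, or for $m=2$ with $\Delta_L\ne 1$, the nontriviality of the Alexander module suffices to produce a metabelian representation sending the infection curve nontrivially \emph{and} extending over a putative concordance exterior. This is not enough. To ensure the image of the infection curve survives into the 4-manifold, the paper uses that the kernel of $tH_1(E_L;\Lambda)\to tH_1(W;\Lambda)$ is self-annihilating under the Blanchfield pairing $\Bl_L$; hence what you need is that $\Bl_L$ is not identically zero, not merely that the module is nontrivial. The Borromean rings are the instructive counterexample: their Alexander module is nontrivial, but it is generated by longitudes, on which $\Bl_L$ automatically vanishes --- so the metabelian argument yields nothing and your scheme stalls. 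The paper's fix is a genuinely two-pronged division that does \emph{not} line up with yours: (i) when $\pi_2/\pi_3\ne 0$ (equivalently $m\ge 3$, or $m=2$ with $\lk(L)\ne\pm1$), use \emph{nilpotent} quotients $\pi/\cP_3\pi$ and Dwyer's theorem to guarantee the infection curve survives; (ii) when $m=2$, $\lk(L)\ne 0$, and $\Delta_L\ne 1$, use Levine's nondegeneracy of $\Bl_L$ to run the metabelian argument. Neither prong alone covers everything; the nilpotent route is essential precisely where the Blanchfield route collapses.

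A smaller point: the obstruction theorem the paper invokes (a form of the Amenable Signature Theorem) does not require the ``controlled $L^2$-Betti numbers'' you mention; instead one needs the target group $\Gamma$ to be amenable and in Strebel's class, and the $4$-manifold to be a $2$-solvable cobordism.
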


Recall that the multivariable polynomial $\Delta_L =
\Delta_L(x_1,\ldots,x_m)$ is well-defined up to multiplication by $\pm
x_1^{a_1}\cdots x_m^{a_m}$.  In particular, $\Delta_L\ne 1$ means that
$\Delta_L$ is not of the form $\pm x_1^{a_1}\cdots
x_m^{a_m}$.

We remark that $\Delta_L\ne 1$ is automatically satisfied for $m\ge
3$, as a consequence of the Torres condition, which implies that
$\Delta_L(1,\dots,1) = 0$ for $m \ge 3$.  We also remark that in the
smooth category, it is known that the conclusion of
Theorem~\ref{maintheorem:simple} holds even for knots and 2-component
links with Alexander polynomial one.  The knot case has been
extensively studied in the literature; see, for example,
\cite{Gompf:1986-1,Endo:1995-1} as early works.  The case of links
with unknotted components has been shown recently
in~\cite{Cha-Kim-Ruberman-Strle:2010-01}.

In fact, we can say more about the links $L_i$ in
Theorem~\ref{maintheorem:simple}\hbox{}.  To state the full result, we recall
some terminology in the following two paragraphs.

For an $m$-component link $L$ in $S^3$, denote its exterior by $E_L =
S^3-\nu(L)$ where $\nu(L)$ denotes a tubular neighborhood of $L$.  We always
identify the boundary $\partial E_L$ with $m(S^1\times S^1)$ along the
zero-framing, and view $E_L$ as a bordered
 3-manifold with this marking.

The notions of symmetric grope and Whitney tower
concordance provide a framework for the study of
link concordance.  They measure the failure of links to be concordant in terms of
fundamental geometric constructions, namely gropes and Whitney towers,
in dimension~4.  Roughly speaking, one defines a \emph{height $n$
  (symmetric) Whitney tower concordance} by replacing the embedded
annuli in the definition of concordance with transversely immersed
annuli which form base surfaces supporting a Whitney tower of
height~$n$.  A \emph{height $n$ (symmetric) grope concordance} is
defined similarly by replacing annuli with disjointly embedded height
$n$ gropes.  These were first used in the context of knot slicing by T.~Cochran, K.~Orr, and P.~Teichner~\cite{Cochran-Orr-Teichner:1999-1}.
(Detailed definitions for arbitrary links can be found, for example,
in~\cite[Section~2.4]{Cha:2012-1}.)  Also, in
\cite[Section~2.3]{Cha:2012-1}, the first author introduced an
analogue of these notions for bordered 3-manifolds, which is called an
$n$-solvable cobordism.  Roughly, an \emph{$n$-solvable cobordism} $W$
between bordered 3-manifolds $M$ and $M'$ is a 4-dimensional cobordism
that induces $H_1(M)\cong H_1(W)\cong H_1(M')$ and admits a certain
``lagrangian'' with ``duals'' for the twisted intersection pairing on
$H_2(W;\Z[\pi/\pi^{(n)}])$, where $\pi=\pi_1(W)$ and $\pi^{(n)}$ is the
$n$th derived subgroup (see
Definition~\ref{definition:solvable-cobordism}).

We can now state the full version of our main theorem.

\begin{maintheorem}\label{maintheorem:full}
  Suppose $L_0$ is an $m$-component link, and suppose $\Delta_L\ne 1$
  if $m=2$.  Then there are infinitely many links $L_1,L_2,\ldots$
  satisfying the following:
  \newdimen\templen{\setbox0=\hbox{($1'$)}\global\templen=\wd0}
  \begin{enumerate}
  \item[\hbox to\templen{($1$)\hss}] For each $i$, there is a
    $\Z[\Z^m]$-homology equivalence of $f\colon (E_{L_i},\partial
    E_{L_i}) \to (E_{L_0}, \partial E_{L_0})$ rel $\partial$, namely
    $f|_\partial$ is the identification under the zero-framing and
    \[
    f_*\colon H_*(E_{L_i};\Z[\Z^m]) \to H_*(E_{L_0};\Z[\Z^m])
    \]
    is an isomorphism.
  \item[\hbox to\templen{($1'$)\hss}] The following invariants are
    identical for all the~$L_i$: Alexander polynomial, Alexander
    ideals, Blanchfield form \cite{Blanchfield:1957-1}, Milnor's
    $\overline\mu$-invariants \cite{Milnor:1957-1}, Orr's transfinite
    homotopy invariant $\theta_\omega$ \cite{Orr:1989-1}
    \textnormal{(}whenever defined\textnormal{)}, and Levine's
    homotopy invariant~$\theta$~\cite{Levine:1989-1}
    \textnormal{(}whenever defined\textnormal{)}q.
  \item[\hbox to\templen{($2$)\hss}] For any $i\ne j$, the exteriors
    $E_{L_i}$ and $E_{L_j}$ are not 2-solvably cobordant.
  \item[\hbox to\templen{($2'$)\hss}] For any $i\ne j$, the links
    $L_i$ and $L_j$ are not height 4 grope concordant, not height 4
    Whitney tower concordant, and not concordant.
  \end{enumerate}
\end{maintheorem}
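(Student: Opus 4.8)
The plan is to build the links $L_i$ by a single \emph{satellite (infection) operation} on $L_0$ along a carefully chosen curve $\eta$, using knots whose Levine--Tristram signatures integrate to something nonzero, and to obstruct $2$-solvable cobordism between the results via an $L^2$-signature ($\rho^{(2)}$) invariant. First I would produce a simple closed curve $\eta\subset E_{L_0}$ that is (a) unknotted in $S^3$, so that infection along $\eta$ again yields a link in $S^3$; (b) contained in the second derived subgroup $\pi_1(E_{L_0})^{(2)}$, so that infection preserves the $\Z[\Z^m]$-homology type of the exterior and all the Alexander-type data in ($1'$); and (c) \emph{homologically essential} in the sense that there is a representation $q\colon\pi_1(E_{L_0})\to\Gamma$ onto a poly-(torsion-free-abelian) group $\Gamma$ of derived length $3$ with $q(\eta)$ of infinite order. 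Condition (a) is automatic once (b) holds: any class in $\pi_1(E_{L_0})$ is carried by an embedded circle that can be unknotted in $S^3$ by changing self-crossings inside small balls disjoint from $L_0$. So the real content is (b)$+$(c), and this is exactly where the hypothesis $\Delta_{L_0}\ne1$ is used (for $m=2$; for $m\ge3$ it is automatic, since Torres' condition gives $\Delta_{L_0}(1,\dots,1)=0$): it guarantees that the rational Alexander module $\pi_1(E_{L_0})^{(1)}/\pi_1(E_{L_0})^{(2)}$ of $L_0$ is nontrivial, whence, by the higher-order Alexander/Blanchfield module machinery of Cochran--Harvey--Leidy, the associated first-order module $\pi_1(E_{L_0})^{(2)}/\pi_1(E_{L_0})^{(3)}$ (suitably localized) is nonzero and carries an element of infinite order surviving to a length-$3$ PTFA quotient $\Gamma$; take $\eta$ to be a geometric representative of such an element.

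\textbf{The links, and properties ($1$) and ($1'$).} Fix a knot $J$ with $\Arf(J)=0$ and $\int_{S^1}\sigma_J(\omega)\,d\omega\ne0$ (for instance the connected sum of two trefoils), and for $i\ge1$ let $L_i$ be the result of infecting $L_0$ along $\eta$ using the connected sum of $i$ copies of $J$ (and $L_0$ itself for $i=0$, consistent with Theorem~\ref{maintheorem:simple}). This infection comes equipped with a standard degree-one map $f_i\colon(E_{L_i},\partial E_{L_i})\to(E_{L_0},\partial E_{L_0})$ which is the identity outside the infection region, collapses the glued-in knot exterior, and is the zero-framing identification on the boundary. Since $\eta\in\pi_1(E_{L_0})^{(1)}$, a Mayer--Vietoris computation in the $\Z^m$-cover shows $f_i$ induces an isomorphism on $H_*(-;\Z[\Z^m])$, which is ($1$). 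Since moreover $\eta\in\pi_1(E_{L_0})^{(2)}$, the map $f_i$ also respects the Blanchfield pairing and the Alexander ideals, and, being a $\Z[\Z^m]$-homology equivalence that is surjective on $\pi_1$, it induces isomorphisms on all nilpotent and transfinite lower-central-series quotients of $\pi_1$ (Stallings and Dwyer); hence the $\overline\mu$-invariants and the invariants $\theta_\omega$ and $\theta$ are unchanged. This establishes ($1'$): all the $L_i$ carry the same invariants as $L_0$, hence as one another.

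\textbf{Properties ($2$) and ($2'$).} Suppose, for a contradiction, that $E_{L_i}$ and $E_{L_j}$ with $i\ne j$ are $2$-solvably cobordant via a bordered cobordism $W$. Using the infection formula for $\rho^{(2)}$ (Cochran--Orr--Teichner, and its refinements), with coefficient system $\phi=q\circ f_i$ on the $L_0$-side and the abelianization followed by $1\mapsto q(\eta)$ on the glued-in copy of the exterior of $\#^iJ$---these match over the gluing torus because the meridian of $\eta$ dies in $\pi_1(E_{L_0})$ and the longitude of the infection knot dies in its first homology---one computes $\rho^{(2)}(E_{L_i},\phi)-\rho^{(2)}(E_{L_0},\phi_0)= i\int_{S^1}\sigma_J(\omega)\,d\omega$, the point being that $q(\eta)$ has infinite order, so the glued-in contribution is the signature integral of the $i$-fold connected sum of $J$. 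Thus the values $\rho^{(2)}(E_{L_i},\phi)$ are pairwise distinct. On the other hand, the $n$-solvable cobordism invariance of $\rho^{(2)}$ from \cite{Cha:2012-1}---applied with a length-$3$ PTFA (or amenable, in the relevant class) coefficient system on $\pi_1(W)$ restricting to both ends and still detecting $\eta$ with infinite order, whose existence uses the $\Z[\Z^m]$-homology equivalences above together with the genericity of $\eta$---forces $\rho^{(2)}(E_{L_i},\phi)=\rho^{(2)}(E_{L_j},\phi)$, a contradiction. This proves ($2$). Finally ($2'$) follows from ($2$): by \cite{Cha:2012-1}, a height $n+2$ grope concordance, and likewise a height $n+2$ Whitney tower concordance, between links induces an $n$-solvable cobordism between their exteriors, and an honest concordance induces an $n$-solvable cobordism for every $n$; so the failure of $2$-solvable cobordism between $E_{L_i}$ and $E_{L_j}$ rules out height $4$ grope concordance, height $4$ Whitney tower concordance, and concordance.

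\textbf{Main obstacle.} The hard part is the interplay of Step~1 with the coefficient system in Step~3: one must manufacture an infection curve that is simultaneously geometrically simple (an unknot meeting $L_0$), deep enough in the derived series (in $\pi_1(E_{L_0})^{(2)}$) for the invariance in ($1'$), and yet homologically essential enough that a length-$3$ representation sees it with infinite order---and, crucially, that it continues to be seen over an \emph{arbitrary} putative $2$-solvable cobordism $W$. This last point requires the amenable-signature technology of \cite{Cha:2012-1} both to build a usable coefficient system on $\pi_1(W)$ from the $\Z[\Z^m]$-homology-equivalence data and to bound the $L^2$-signature defect of $W$. The hypothesis $\Delta_{L_0}\ne1$ enters precisely here: it is what makes the first-order Alexander module of $L_0$ nonzero, and hence what permits such an $\eta$ to exist at all.
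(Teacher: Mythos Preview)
There is a genuine gap in your choice of depth for the infection curve. You place $\eta$ in $\pi^{(2)}=\pi_1(E_{L_0})^{(2)}$ and then appeal to a PTFA target $\Gamma$ of derived length~$3$. But the Amenable Signature Theorem yields a $\rho^{(2)}$-obstruction to $n$-solvable cobordism only for targets with $\Gamma^{(n)}=\{e\}$; to conclude~($2$) you must take $\Gamma^{(2)}=\{e\}$, and any such $\Gamma$ annihilates all of $\pi^{(2)}$, hence your~$\eta$, so the infection contributes nothing to $\rho^{(2)}$. With a length-$3$ target you would at best obstruct $3$-solvable cobordism, weakening both~($2$) and~($2'$). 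Separately, placing $\eta$ in $\pi^{(2)}$ is unnecessary for~($1'$): once $\eta\in\pi^{(1)}$, infection already gives a $\Z[\Z^m]$-homology equivalence rel~$\partial$, and the Blanchfield form together with all the other invariants in~($1'$) are determined by that data alone (Section~\ref{section:observations-on-main-theorem}). Finally, your assertion that the localized module $\pi^{(2)}/\pi^{(3)}$ is nonzero for an \emph{arbitrary} link with $\Delta_{L_0}\ne1$ is not supplied by the Cochran--Harvey--Leidy machinery; that work constructs examples with nontrivial higher-order modules, it does not show every such link has one.

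The paper works one level shallower: the infection curves lie in $\pi^{(1)}$ (not $\pi^{(2)}$) and the targets have derived length~$2$, matching the $2$-solvable hypothesis exactly. The price is that no single representation works uniformly, so the argument splits into two cases. When $\pi_2/\pi_3\ne0$ (all $m\ge3$, and $m=2$ with $\lk\ne\pm1$, by Lemma~\ref{lemma:rank-of-3rd-lower-central-quotient}) the target is the \emph{nilpotent} quotient $\pi_1(W)/\cP_3\pi_1(W)$, and Dwyer's theorem (Lemma~\ref{lemma:dwyer-for-solvable-cobordism}) identifies it with the corresponding quotient of $\pi_1(E_{L_i})$, so a curve chosen nontrivial in $\pi_2/\pi_3$ survives. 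When $m=2$ with $\lk\ne0$ and $\Delta_L\ne1$, the target is the metabelian quotient $\pi_1(W)/\cP^2\pi_1(W)$; here the key inputs are that the Blanchfield form is nonzero (Lemma~\ref{lemma:2-comp-link-with-nonzero-blanchfield}) and that $\Ker\{tH_1(E_{L_i};\Lambda)\to tH_1(W;\Lambda)\}$ is Blanchfield self-annihilating (Theorem~\ref{theorem:blanchfield-self-annilator}), which forces at least one of several infection curves generating $tH_1$ to survive. The paper remarks explicitly that neither route alone suffices (the Borromean rings defeat the Blanchfield route; $|\lk|=1$ defeats the nilpotent one), so a single unified construction of the kind you sketch should not be expected to go through.
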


As references for the Alexander invariants appearing in
Theorem~\ref{maintheorem:full}, see, for example,
\cite{Kawauchi:1996-1,Hillman:2002-1}.

Experts will easily see that Theorem~\ref{maintheorem:full} ($1'$)
and ($2'$) are consequences of ($1$) and ($2$), respectively.  In
Section~\ref{section:observations-on-main-theorem}, we discuss this in
more details including some background for the reader's convenience.

We remark that the links $L_i$ in Theorem~\ref{maintheorem:full} can
be chosen in such a way that they are indistinguishable to the eyes of
the asymmetric Whitney tower/grope theory, which is another
framework for the study of link concordance extensively investigated in
recent work of J. Conant, R. Schneiderman, and P. Teichner
(see~\cite{Conant-Teichner-Schneiderman:2011-1} as an extended summary
providing other references).  Namely, the $L_i$ (with the
zero-framing) are mutually order $n$ Whitney tower/grope concordant
for any~$n$ in the sense of
\cite[Definition~3.1]{Conant-Schneiderman-Teichner:2012-2}.  This is
discussed in
Section~\ref{section:satellite-and-asymmetric-Whitney-towers}.

A key ingredient that we use to distinguish concordance classes of
links is the Amenable Signature Theorem which first appeared in
\cite{Cha-Orr:2009-01}.  It generalizes a result presented earlier in
the influential work of
Cochran-Orr-Teichner~\cite{Cochran-Orr-Teichner:1999-1}.  In
\cite{Cha:2012-1}, the first author formulated a symmetric Whitney
tower/grope framework for arbitrary links and bordered 3-manifolds,
and gave (a refined version of) the Amenable Signature Theorem as a
Cheeger-Gromov $\rhot$-invariant obstruction to the existence of
certain Whitney towers and gropes.  In the proof of our main result,
we use a special case of this which is stated as
Theorem~\ref{theorem:amenable-signature-theorem} in this paper.

An interesting aspect of the proof of Theorem~\ref{maintheorem:full} is
that it is separated into two cases which illustrate significantly
different aspects contained in the single problem, as discussed below.

For a 3-manifold $M$, the Cheeger-Gromov $\rhot$-invariant
$\rhot(M,\phi)$ is a real number associated to a group homomorphism
$\phi\colon \pi_1(M)\to \Gamma$, which we call a representation
into~$\Gamma$.  We refer to
\cite{Cheeger-Gromov:1985-1,Cochran-Teichner:2003-1} for details.  An
essential requirement for the use of $\rhot$-invariants in the study
of concordance and related 4-dimensional equivalence relations is that
the representation $\phi$ should have two properties: (1) $\phi$ does
not annihilate certain interesting elements so that it does not lose
too much information, and (2) $\phi$ factors through the fundamental
group of a relevant 4-manifold, for example, the exterior of a
concordance, or more generally, a 4-manifold obtained by symmetric
surgery on a Whitney tower or a grope.

To find such representations, first we consider links which are
``big'' in the
sense that they admit representations into non-abelian nilpotent
quotients.  It is straightforward to show (see
Lemma~\ref{lemma:rank-of-3rd-lower-central-quotient}) that a link $L$
is ``big'' if and only if $L$ has either at least 3 components or if
$L$ is a 2-component link $L$ with $\lk(L)\ne \pm 1$.  For these
links, we apply Dwyer's theorem to show that representations into
certain nilpotent quotients have all the desired properties.  (See
Theorem~\ref{theorem:main-result-for-nontrivial-lcs}.)

For links which are not big, we employ another approach using the
\emph{Blanchfield duality} of the link module $H_1(E_L;\Z[\Z^m])$.  In
fact, for links which admits a nonzero Blanchfield pairing on the
torsion part of the link module, this enables us to prove
Theorem~\ref{maintheorem:full} using certain representations into
\emph{solvable} groups, which are not necessarily nilpotent.  (See
Theorem~\ref{theorem:main-result-for-nonzero-blanchfield}.)  This
applies especially to 2-component links with $\lk(L)\ne 0$ and
$\Delta_L\ne 1$, which we may call ``small''.  (See
Lemma~\ref{lemma:2-comp-link-with-nonzero-blanchfield}).  The case of
small links resembles known approaches to the study of knot
concordance \cite{Casson-Gordon:1986-1,Cochran-Orr-Teichner:1999-1}
and it is related to earlier work of the authors
\cite{Friedl-Powell:2011-1,Cha:2012-1}.

The proofs of the nilpotent and solvable cases of
Theorem~\ref{maintheorem:full} occupy
Sections~\ref{section:links-with-nontrivial-lower-central-quotients}
and~\ref{section:2-comp-links-with-nonzero-lk} respectively.

We remark that on their own, neither the class of ``big'' links nor
the class of ``small'' links covers all the cases in
Theorem~\ref{maintheorem:full}, while they have a significant overlap,
for example two component links $L$ with $|\lk(L)|>1$.  There are
links which do not have useful nilpotent representations, for example
two component links $L$ with $|\lk(L)|=1$, so that the Blanchfield
pairing method is required as discussed above.  On the other hand,
there are links for which the Blanchfield pairing method fails to give
any useful representations.  An enlightening example is the Borromean
rings.  Its Alexander module is generated by the longitudes.  Since
the Blanchfield pairing automatically vanishes on the longitudes, it
is apparent that the Blanchfield pairing cannot be used to prove
property (1) for any representation~$\phi$.


\subsubsection*{Conventions}
Manifolds are assumed to be topological and oriented, and submanifolds
are assumed to be locally flat.

\subsubsection*{Acknowledgements}
The authors thank Kent Orr for helpful discussions.  The first author
was supported by NRF grants 2010--0011629, 2010--0029638, and
2012--0009179 funded by the government of Korea.  The second author
would like the Indiana University Mathematics department, and
especially Kent Orr, for its hospitality.

\section{Some observations on Theorem~\ref{maintheorem:full}}
\label{section:observations-on-main-theorem}

In this section we observe that Theorem~\ref{maintheorem:full} ($1'$)
and ($2'$) are consequences of (1) and (2), respectively.  We also
discuss some necessary background.

Recall that a \emph{link} $L$ with $m$ components in $S^3$ is a union
of $m$ disjoint oriented circles embedded in~$S^3$.  If $m=1$ then it
is called a \emph{knot}.  Two links $L$ and $L'$ are \emph{concordant}
if there is an $h$-cobordism (i.e.\ disjoint union of annuli) between
$L\times\{0\}$ and $L'\times\{1\}$ embedded in $S^3\times[0,1]$.

\subsection*{Alexander invariants and Blanchfield pairing}

We will first discuss the Alexander polynomial, Alexander ideals, and
Blanchfield form.  The \emph{Alexander module} of a link $L$ with $m$
components is defined to be $H_1(E_L, \{*\};\Z[\Z^m])$, viewed as a
module over the group ring
$\Z[\Z^m]=\Z[x_1^{\pm1},\ldots,x_m^{\pm1}]$, where the exterior $E_L$
is endowed with the abelianization map $\pi_1(E_L) \to \Z^m$ (sending
the $i$-meridian to the $i$-standard basis vector of $\Z^m$), and $*$
is a fixed basepoint in~$E_L$.  The module $H_1(E_L;\Z[\Z^m])$ is
called the \emph{link module} of~$L$.  The Alexander polynomial and
Alexander ideals are determined by the Alexander module.  It is also
easy to see from the long exact sequence of a pair that the link
module determines the Alexander module and vice versa.  Therefore the
conclusions in Theorem~\ref{maintheorem:full}~($1'$) on the Alexander
polynomials and Alexander ideals are consequences of
Theorem~\ref{maintheorem:full}~($1$).

 Let $Q=\Q(x_1,\ldots,x_m)$ be the quotient field
of~$\Z[\Z^m]$, namely the rational function field on $m$
variables~$x_i$.  For a $\Z[\Z^m]$-module $A$, we denote its torsion
part by
\[
tA = \{x\in A \mid rx = 0 \text{ for some nonzero }r \in \Z[\Z^m]\}.
\]
Due to Blanchfield~\cite{Blanchfield:1957-1}, there is a
sesquilinear pairing
\[
tH_1(E_L;\Z[\Z^m])\times tH_1(E_L;\Z[\Z^m]) \to Q/\Z[\Z^m]
\]
which is called the \emph{Blanchfield pairing} of~$L$.  It is
essentially defined by the duality of $(E_L,\partial E_L)$ over
$\Z[\Z^m]$-coefficients.  We also refer to \cite{Hillman:2002-1},
particularly Section~2.3, for a thorough discussion of the Blanchfield
pairing.

Since it is defined from duality, the Blanchfield pairing is
functorial with respect to maps preserving the fundamental class,
namely degree one maps on link exteriors.  In particular, we have the
following: the conclusion of Theorem~\ref{maintheorem:full}~($1$)
that there is a $\Z[\Z^m]$-homology equivalence $f\colon
(E_{L_i},\partial E_{L_i}) \to (E_{L_0},\partial E_{L_0})$ implies
that the Blanchfield pairings of $L_i$ and $L_0$ are isomorphic.
(Note that a $\Z[\Z^m]$-homology equivalence is automatically an
integral homology equivalence and consequently a degree one map.)

\subsection*{Milnor's invariants}

In \cite{Milnor:1957-1}, J. Milnor defined invariants
$\overline\mu_L(I)$ for a link $L$ with $m$ components, where $I$ is a
finite sequence of integers in $\{1,\ldots,m\}$.  When $I$ has length
$|I|$, $\overline\mu_L(I)$ is called a $\overline\mu$-invariant of
length~$|I|$.  This is the primary invariant for the study of
structure peculiar to link concordance compared to the knot case.

Although $\overline\mu_L(I)$ is originally defined as a certain
residue class of an integer, a known method to formulate that ``two
links have the identical $\overline\mu$-invariants of length $\le q$''
in the strongest sense is as follows.  Recall that the lower central
series of a group $\pi$ is defined by $\pi_1 := \pi$,
$\pi_{q+1}=[\pi,\pi_q]$ where the bracket designates the commutator.
We say that \emph{two links $L$ and $L'$ with $\pi=\pi_1(E_{L})$ and
  $G=\pi_1(E_{L'})$ have the same $\overline\mu$-invariants of length
  $\le q$} if there is an isomorphism $h\colon \pi/\pi_q \to G/G_q$
that preserves (the conjugacy class) of each meridian and each
0-linking longitude.

\begin{lemma}
  Two links $L$ and $L'$ have the same $\overline\mu$-invariants of
  any length if there is an integral homology equivalence $f\colon
  (E_{L},\partial E_{L}) \to (E_{L'},\partial E_{L'})$ rel~$\partial$.
\end{lemma}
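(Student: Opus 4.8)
The plan is to show that an integral homology equivalence $f\colon (E_L,\partial E_L)\to (E_{L'},\partial E_{L'})$ rel $\partial$ induces, on fundamental groups, isomorphisms $\pi/\pi_q\xrightarrow{\cong} G/G_q$ for every $q$, and then to check that these isomorphisms carry meridians to meridians and $0$-linking longitudes to $0$-linking longitudes. The key input is Stallings' theorem: a homomorphism $\phi\colon A\to B$ of groups that is an isomorphism on $H_1$ and an epimorphism on $H_2$ induces isomorphisms $A/A_q\cong B/B_q$ on all lower central quotients. So the first step is to set $\phi=f_*\colon \pi_1(E_L)\to\pi_1(E_{L'})$ and verify its hypotheses. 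Since $f$ is a $\Z$-homology equivalence rel $\partial$ (and in particular a degree one map), $f_*$ is an isomorphism on $H_1$; and by a standard transfer/duality argument for degree one maps — or simply because $f_*$ is an iso on $H_1(\,\cdot\,;\Z)\cong\Z^m$ and $H_2$ of a link exterior is pinned down by $H_1$ together with surjectivity coming from the degree one property — $f_*$ is surjective on $H_2$. Hence Stallings applies and gives $\pi/\pi_q\cong G/G_q$ for all $q$.

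Next I would track the boundary data. Because $f$ restricts to the identity on $\partial E_L = \partial E_{L'} = m(S^1\times S^1)$ under the zero-framing identification, the composite $\pi_1(\partial_i E_L)\to\pi_1(E_L)\xrightarrow{f_*}\pi_1(E_{L'})$ agrees with the inclusion of the $i$th boundary torus on the nose. The $i$th meridian $\mu_i$ and the $i$th zero-linking longitude $\lambda_i$ are represented by specific curves on this torus, so their images under $f_*$ are (conjugate to) $\mu_i$ and $\lambda_i$ in $\pi_1(E_{L'})$. Passing to the quotients $\pi/\pi_q$, the induced isomorphism $h\colon \pi/\pi_q\to G/G_q$ therefore preserves the conjugacy class of each meridian and each $0$-linking longitude. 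By the definition recalled just before the lemma, this is exactly what it means for $L$ and $L'$ to have the same $\overline\mu$-invariants of length $\le q$; since $q$ was arbitrary, they agree in all lengths.

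The only genuine point requiring care — and the step I would expect to be the main obstacle if anything is — is the surjectivity of $f_*$ on $H_2$, which is what Stallings' theorem actually needs (isomorphism on $H_1$ and epimorphism on $H_2$). One cleanly argues this from the fact that $f$ is a degree one map of pairs: degree one maps are surjective on homology with any coefficients, by the standard argument using $f_*(f^![\,\cdot\,]) = [\,\cdot\,]$ for the Umkehr map, or equivalently by naturality of the cap product with the fundamental class. So $f_*\colon H_2(E_L;\Z)\to H_2(E_{L'};\Z)$ is onto, and together with the $H_1$ isomorphism this lets Stallings' theorem run. Everything else — the identification of the boundary curves, the compatibility with conjugacy classes, the bookkeeping over all $q$ — is routine. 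Note also that the hypothesis here is merely an \emph{integral} homology equivalence, so this lemma is strictly weaker than (and hence a consequence of) Theorem~\ref{maintheorem:full}~($1$), which gives a $\Z[\Z^m]$-homology equivalence; this is why the $\overline\mu$-invariant statement in ($1'$) follows from ($1$).
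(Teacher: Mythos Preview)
Your proof is correct and follows essentially the same approach as the paper: invoke Stallings' theorem for $f_*$ and then use the rel-$\partial$ condition to see that meridians and longitudes are preserved. One minor remark: your care about $H_2$-surjectivity is unnecessary, since the hypothesis is that $f$ is an integral homology \emph{equivalence}, so $f_*$ is already an isomorphism on every $H_*$ and no degree-one/Umkehr argument is needed.
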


\begin{proof}
  Let $\pi=\pi_1(E_{L_i})$ and $G=\pi_1(E_{L_0})$.  By Stallings'
  theorem \cite{Stallings:1965-1}, $f$ induces an isomorphism $h\colon
  \pi/\pi_q \cong G/G_q$.  Since $f$ is fixed on the boundary, $h$
  preserves the conjugacy classes of meridians and longitudes.
\end{proof}

Since the map $f$ in the conclusion of
Theorem~\ref{maintheorem:full}~($1$) is automatically an integral
homology equivalence, it follows that the Milnor invariant conclusion
in Theorem~\ref{maintheorem:full}~($1'$) is a consequence of
Theorem~\ref{maintheorem:full}~($1$).

\subsection*{Homotopy invariants of Orr and Levine}

In \cite{Orr:1989-1}, K. Orr introduced a homotopy theoretic invariant
of links which is still somewhat mysterious.  For a link $L$, suppose
all $\overline\mu$-invariants vanish.  Then for a fixed homomorphism
of the free group $F$ on $m$ generators into $\pi=\pi_1(E_L)$ that
sends generators to meridians, we obtain an induced isomorphism
$F/F_q\cong \pi/\pi_q$ by Stallings' theorem~\cite{Stallings:1965-1}.
These give rise to $\pi\to \overline F := \varprojlim F/F_q$ and $E_L
\to K(\overline F, 1)$.  Let $K_\omega$ be the mapping cone of the map
$K(F,1)\to K(\overline F,1)$ induced by the inclusion $F\to \overline
F$.  Then it is easily seen that the map $E_L \to K(\overline F,1) \to
K_\omega$ extends to a map $o_L\colon S^3 \to K_\omega$.  Its homotopy
class $\theta_\omega(L):=[o_L] \in \pi_3(K_\omega)$ is Orr's
transfinite homotopy invariant.  It is unknown whether this invariant
can be non-vanishing for links which have all
$\overline{\mu}$-invariants zero.

\begin{lemma}
  If there is an integral homology equivalence $f\colon
  (E_{L},\partial E_{L}) \to (E_{L'},\partial E_{L'})$ rel~$\partial$,
  then $\theta_\omega(L)=\theta_\omega(L')$.
\end{lemma}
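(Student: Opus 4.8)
The plan is to show that the entire construction of $\theta_\omega$ is natural with respect to degree one maps of link exteriors which are the identity on the boundary, and then to observe that our $f$ extends to a degree one self-map of $S^3$, hence to a map homotopic to the identity. First note that $\theta_\omega(L')$ is in fact defined: by the previous lemma $f$ induces, for every $q$, an isomorphism $\pi_1(E_L)/\pi_1(E_L)_q\cong\pi_1(E_{L'})/\pi_1(E_{L'})_q$ carrying meridians to meridians and $0$-framed longitudes to $0$-framed longitudes, so $L'$ has the same (vanishing) $\overline{\mu}$-invariants as $L$ and Orr's invariant $\theta_\omega(L')\in\pi_3(K_\omega)$ is defined. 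Moreover $K_\omega$, the mapping cone of $K(F,1)\to K(\overline F,1)$ with $\overline F=\varprojlim F/F_q$ and $F$ free on $m$ generators, depends only on $m$ and not on the link, so $\theta_\omega(L)$ and $\theta_\omega(L')$ lie in the same group and the asserted equality is meaningful.

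Next I would arrange compatible choices. Write $\pi=\pi_1(E_L)$ and $G=\pi_1(E_{L'})$. Fix $\varphi\colon F\to\pi$ sending the generators to (based representatives of) the meridians of $L$, and put $\varphi'=f_*\circ\varphi\colon F\to G$; since $f$ is the identity on $\partial E_L=\partial E_{L'}$ under the zero-framing, $f_*$ carries the $i$-th meridian of $L$ to the $i$-th meridian of $L'$ (up to conjugacy, which is harmless for the construction), so $\varphi'$ again sends generators to meridians. By the vanishing of the $\overline{\mu}$-invariants, $\varphi$ and $\varphi'$ induce isomorphisms $F/F_q\cong\pi/\pi_q$ and $F/F_q\cong G/G_q$, and since two homomorphisms out of $F/F_q$ which agree on generators coincide, these are compatible with $f_*$; passing to inverse limits gives maps $\psi\colon\pi\to\overline F$ and $\psi'\colon G\to\overline F$ with $\psi=\psi'\circ f_*$. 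Realizing $\psi,\psi'$ by maps $E_L\to K(\overline F,1)$ and $E_{L'}\to K(\overline F,1)$ (unique up to homotopy, the target being aspherical), the composite $E_L\xrightarrow{f}E_{L'}\to K(\overline F,1)$ is homotopic to the chosen map $E_L\to K(\overline F,1)$. Composing with $K(\overline F,1)\to K_\omega$ and extending over the solid tori re-glued to form $S^3$ — possible precisely because all $\overline{\mu}$-invariants vanish — produces $o_L,o_{L'}\colon S^3\to K_\omega$ with $\theta_\omega(L)=[o_L]$ and $\theta_\omega(L')=[o_{L'}]$, and, crucially, with $o_{L'}\circ f$ restricted to $E_L$ homotopic to $o_L$ restricted to $E_L$.

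Finally, being an integral homology equivalence rel $\partial$ between compact oriented $3$-manifolds, $f$ has degree one, and since $f$ is the identity on $\partial E_L$ it extends by the identity on the meridional solid tori to a degree one map $\widehat f\colon S^3\to S^3$, which is therefore homotopic to $\id_{S^3}$. The composite $o_{L'}\circ\widehat f$ restricts on $E_L$ to $o_{L'}|_{E_{L'}}\circ f$, which by the previous paragraph is homotopic to the restriction of $o_L$; using that $E_L\hookrightarrow S^3$ is a cofibration, I would homotope $o_{L'}\circ\widehat f$ to a legitimate choice of $o_L$, and then, invoking the well-definedness of $\theta_\omega(L)\in\pi_3(K_\omega)$ (independence of the extension over the solid tori and of the meridional map, established in Orr's construction), conclude $\theta_\omega(L)=[o_{L'}\circ\widehat f]=[o_{L'}\circ\id_{S^3}]=[o_{L'}]=\theta_\omega(L')$. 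The only points needing genuine care are the bookkeeping for the compatible choices — in particular absorbing the conjugacy ambiguity of meridians into the known insensitivity of $\theta_\omega$ to such choices — and the appeal to well-definedness to pass from $o_{L'}\circ\widehat f$ to a literal $o_L$; all the real topological input is Stallings' theorem and the triviality of degree one self-maps of $S^3$, both already in play above.
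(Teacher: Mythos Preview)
Your argument is correct and follows essentially the same route as the paper: transport the meridional map via $f_*$, extend $f$ to a degree-one self-map $\widehat f$ of $S^3$, and use that $\widehat f\simeq\id$ to identify $[o_{L'}\circ\widehat f]$ with $[o_{L'}]$. The paper's version is terser (it simply asserts that the extended map $g\colon S^3\to S^3$ satisfies $o_{L'}\circ g\simeq o_L$ by construction and then invokes $\deg g=1$), whereas you have written out the compatibility of the choices and the well-definedness appeal more carefully; the underlying argument is the same.
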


\begin{proof}
  Let $\pi=\pi_1(E_{L})$ and $G=\pi_1(E_{L'})$.  Fix a map $\mu\colon
  F\to E_L$ sending generators to meridians.  The map $f\circ
  \mu\colon F \to E_{L'}$ also sends generators to meridians.  Define
  the map $o_L$ and $o_{L'}\colon S^3 \to K_\omega$ as above, using
  $\mu$ and $f\circ \mu$. From the definition of $o_L$, it is easily
  seen that the map $g\colon S^3\to S^3$ obtained by filling in $f$
  with the identity map of a solid torus satisfies $g \circ o_L =
  o_{L'}$.  Since $g$ has degree one, it follows that
  $\theta_\omega(L)=[o_L] = [o_{L'}]=\theta_\omega(L')$.
\end{proof}

This shows that the Orr invariant conclusion in
Theorem~\ref{maintheorem:full}~($1'$) is a consequence of
Theorem~\ref{maintheorem:full}~($1$).  The same argument works for
Levine's homotopy invariant $\theta(L)$ defined
in~\cite{Levine:1989-1}.  We omit details.

\subsection*{Solvable cobordism and Whitney tower/grope concordance}

The notion of an $n$-solvable cobordism used in
Theorem~\ref{maintheorem:full} was formulated in
\cite[Section~2.3]{Cha:2012-1}, as a version of an $n$-solution for
manifolds with boundary.  The notion of an $n$-solution was introduced
by Cochran-Orr-Teichner~\cite{Cochran-Orr-Teichner:1999-1}, and was
generalized by Harvey~\cite{Harvey:2006-1} to the case of links.  For
later use in this paper we describe its precise definition below.
Recall that a bordered 3-manifold $M$ over a surface $\Sigma$ is a
3-manifold with boundary identified with $\Sigma$, and for two
bordered 3-manifolds $M$ and $M'$ over the same surface, a relative
cobordism $W$ is a 4-manifold satisfying $\partial W = M\cup_\partial
-M'$.

\begin{definition}[Solvable cobordism]
  \label{definition:solvable-cobordism}
  We say that a relative cobordism $W$ between bordered 3-manifolds
  $M$ and $M'$ is an \emph{$n$-solvable cobordism} if (i) the
  inclusions induce $H_1(M)\cong H_1(W) \cong H_1(M')$ and (ii) there
  are homology classes $\ell_1,\ldots,\ell_m, d_1,\ldots,d_m\in
  H_2(W;\Z[\pi/\pi^{(n)}])$, where $\pi=\pi_1(W)$, such that the
  $\Z[\pi/\pi^{(n)}]$-valued intersection pairing $\lambda_n$ on
  $H_2(W;\Z[\pi/\pi^{(n)}])$ satisfies $\lambda_n(\ell_i,\ell_j)=0$
  and $\lambda_n(\ell_i,d_j)=\delta_{ij}$.
\end{definition}

In \cite[Section~2]{Cha:2012-1} the following was observed by using
techniques in \cite[Section~8]{Cochran-Orr-Teichner:1999-1}:
\begin{align*}
  \text{$L$ and $L'$ are concordant} &\Longrightarrow
  \text{$L$ and $L'$ are height $n+2$ grope concordant}
  \\
  & \Longrightarrow \text{$L$ and $L'$ are height $n+2$ Whitney tower
    concordant}
  \\
  &\Longrightarrow\text{$E_L$ and $E_{L'}$ are $n$-solvably cobordant}
\end{align*}

For the definitions of Whitney tower and grope concordance, refer to,
for example, \cite[Definition~2.12, Definition~2.14]{Cha:2012-1}.

From the above implications, we see that
Theorem~\ref{maintheorem:full}~($2'$) is an immediate consequence of
Theorem~\ref{maintheorem:full}~($2$).

\section{Links with nontrivial lower central series quotients}
\label{section:links-with-nontrivial-lower-central-quotients}

The goal of this section is to prove the following special case of
Theorem~\ref{maintheorem:full}\hbox{}.  Recall that we denote the lower
central series of a group $\pi$ by~$\{\pi_q\}$.

\begin{theorem}
  \label{theorem:main-result-for-nontrivial-lcs}
  Suppose $L$ is an $m$-component link with $\pi=\pi_1(E_L)$ such that
  $\pi_2/\pi_3 \ne 0$.  Then there are infinitely many links $L=L_0,
  L_1,L_2,\ldots$ such that there is a $\Z[\Z^m]$-homology equivalence
  $f\colon (E_{L_i},\partial E_{L_i}) \to (E_{L_0}, \partial E_{L_0})$
  rel $\partial$ for each $i$ but the exteriors $E_{L_i}$ and
  $E_{L_j}$ are not 2-solvably cobordant for any $i\ne j$.
\end{theorem}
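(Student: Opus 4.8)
The plan is to construct the links $L_i$ by an \emph{infection} (satellite) construction on $L_0$, using as infection curves elements of $\pi=\pi_1(E_{L_0})$ that lie in $\pi_2$ but not in $\pi_3$, and tying in a carefully chosen auxiliary knot. Concretely, fix a curve $\eta\subset E_{L_0}$ representing a nontrivial element of $\pi_2/\pi_3$, which exists by hypothesis; we may take $\eta$ to be an unknot in $S^3$ that is nullhomologous in $E_{L_0}$. For a knot $K$, let $L_0(\eta,K)$ denote the link obtained by removing a tubular neighborhood of $\eta$ and regluing the exterior of $K$; choosing $K$ with trivial Alexander polynomial guarantees that the inclusion-induced map $E_{L_0(\eta,K)}\to E_{L_0}$ (the standard degree-one map that is the identity outside $\nu(\eta)$) is a $\Z[\Z^m]$-homology equivalence rel $\partial$, since $\eta$ bounds in the $\Z^m$-cover and infecting by an Alexander-polynomial-one knot does not change $\Z[\Z^m]$-homology. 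This handles the ``$L_i$ has the same Alexander invariants as $L_0$'' half of the statement for any such~$K$, and also gives the needed map~$f$.

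The real content is to choose an infinite family of knots $K_i$ so that the resulting $L_i := L_0(\eta, K_i)$ have mutually non-$2$-solvably-cobordant exteriors. Here I would invoke the Amenable Signature Theorem (Theorem~\ref{theorem:amenable-signature-theorem}) in the form of a Cheeger-Gromov $\rhot$-invariant obstruction: if $E_{L_i}$ and $E_{L_j}$ were $2$-solvably cobordant via $W$, then for a suitable representation $\phi\colon\pi_1(W)\to\Gamma$ with $\Gamma$ amenable and lying in Strebel's class for a prime~$p$, one gets $\rhot(E_{L_i},\phi|)-\rhot(E_{L_j},\phi|)=0$. The representation is built from the lower central series: since $\eta\in\pi_2\setminus\pi_3$, mapping $\pi$ onto $\pi/\pi_3$ (tensored appropriately so that the target is a $p$-group, hence amenable) sends $\eta$ to a nontrivial element, so by Dwyer's theorem this representation extends over any $2$-solvable cobordism without killing $\eta$. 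This is exactly property~(1) and property~(2) from the introduction's discussion, and it is where the hypothesis $\pi_2/\pi_3\ne 0$ is used decisively.

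With the representation in hand, a standard additivity/satellite computation (as in Cochran-Orr-Teichner and in \cite{Cha:2012-1}) shows that $\rhot(E_{L_i},\phi|)$ differs from $\rhot(E_{L_0},\phi|)$ by a term of the form $\sum \rhot(S^3_{K_i}, \text{(abelian rep.)})$, i.e.\ a sum of Cheeger-Gromov $\rho$-invariants (integrals of Levine-Tristram signatures over the circle) of the infecting knots $K_i$, one for each lift of $\eta$ to the relevant $\Gamma$-cover. I would then pick the $K_i$ so that these integral-of-signature quantities realize an infinite set of distinct real values with large gaps — for instance iterated connected sums $K_i = \#^{n_i} J$ of a fixed knot $J$ with $\int_{S^1}\sigma_J\ne 0$, for a rapidly growing sequence $n_i$ — so that the signature-defect contributions cannot be matched, even after accounting for the bounded contributions coming from $H_2(W)$ (which are controlled by the amenable signature bound). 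The main obstacle is precisely the verification that the chosen $\phi$ simultaneously (a) extends over every hypothetical $2$-solvable cobordism and (b) is nontrivial on enough lifts of $\eta$ to detect the signature; this is the heart of the argument and is supplied by combining Dwyer's theorem with the naturality of lower central quotients under $2$-solvable cobordisms, after which the numerical separation of the $\rhot$-values is routine.
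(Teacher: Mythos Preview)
Your overall strategy---infection along a curve $\eta\in\pi_2\setminus\pi_3$, a nilpotent-quotient representation extended over a solvable cobordism via Dwyer's theorem, the Amenable Signature Theorem, and separation of the $L_i$ by making the knot-signature contributions large---is exactly the paper's approach. There is, however, an internal contradiction in your proposal: you require the infecting knot $K$ to have trivial Alexander polynomial in order to obtain the $\Z[\Z^m]$-homology equivalence, and then you choose $K_i=\#^{n_i}J$ with $\int_{S^1}\sigma_J\ne 0$. These are incompatible, since $\Delta_K=1$ forces $K$ to be algebraically slice, hence $\sigma_K(\omega)\equiv 0$ and every abelian $\rhot$-invariant of $K$ vanishes; your obstruction would be identically zero. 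The resolution is that the Alexander-polynomial-one hypothesis is simply unnecessary. Because $\eta\in[\pi,\pi]$, it maps trivially to $\Z^m$, so in the $\Z^m$-cover the infecting piece $E_K$ lifts to disjoint copies of itself; a Mayer--Vietoris comparison with $E_{L_0}$ then gives the $\Z[\Z^m]$-homology equivalence for \emph{any} knot $K$, since $E_K$ and $S^1\times D^2$ have identical integral homology with matching boundary maps. (Incidentally, your claim that ``$\eta$ bounds in the $\Z^m$-cover'' is false: $\eta\notin\pi_3\supset[\pi_2,\pi_2]$, so $\eta$ is nonzero in $H_1$ of the cover. Fortunately this is not what is needed.) The paper proceeds exactly this way and infects with connected sums of trefoils.

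One smaller inconsistency: you tensor so that the target is a $p$-group, but then describe the knot contribution as an integral of Levine--Tristram signatures over the circle. If $\phi(\eta)$ has order $p$ the contribution is $\rhot(K,\Z_p)$, a finite average over $p$th roots of unity, not the full integral; the paper uses a mixed-coefficient lower central series $\cP=(\Q,\Z_p)$ to set this up uniformly, which is genuinely needed when $\pi_2/\pi_3$ has torsion (e.g.\ two-component links with $|\lk(L)|>1$). Finally, the bounded ``leftover'' term is $\rhot(E_L\cup_\partial -E_L,\varphi)$, controlled by the Cheeger--Gromov universal bound for that fixed closed $3$-manifold, not by anything coming from~$H_2(W)$; the Amenable Signature Theorem already gives $\rhot(\partial W,\phi)=0$ on the nose.
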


From Theorem~\ref{theorem:main-result-for-nontrivial-lcs} and the
discussions in Section~\ref{section:observations-on-main-theorem}, it
follows that Theorem~\ref{maintheorem:full} holds whenever
$\pi_2/\pi_3\ne 0$.

Before we prove Theorem~\ref{theorem:main-result-for-nontrivial-lcs},
we clarify when the lower central series hypothesis is satisfied.

\begin{lemma}
  \label{lemma:rank-of-3rd-lower-central-quotient}
  Suppose $L$ is an $m$-component link with $\pi=\pi_1(E_L)$, $m\ge
  2$.
  \begin{enumerate}
  \item The abelian group $\pi_2/\pi_3$ has rank $\ge (m-1)(m-2)/2$.
  \item If $m=2$, then $\pi_2/\pi_3\cong \Z/\lk(L)\Z$.
  \end{enumerate}
  Consequently, $\pi_2/\pi_3 \ne 0$ if and only if either (i) $m\ge 3$
  or, (ii) $m=2$ and $\lk(L)\ne \pm1$.
\end{lemma}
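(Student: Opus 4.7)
The plan is to reduce everything to a computation in the free nilpotent group $F/F_3$, where $F$ is the free group on generators $x_1,\ldots,x_m$ representing the meridians of $L$. By Milnor's classical theorem on nilpotent quotients of link groups (\cite{Milnor:1957-1}), the natural map $F\to \pi$ sending $x_i$ to the $i$th meridian induces an isomorphism
\[
  \pi/\pi_3 \;\cong\; (F/F_3) \big/ \langle\langle \ell_1,\ldots,\ell_m\rangle\rangle,
\]
where $\ell_i$ denotes any word in $F$ representing the $i$th longitude. Since each $\ell_i$ lies in $F_2$ and $F_2/F_3$ is central (hence normal closure equals ordinary subgroup), restricting to commutator subgroups gives
\[
  \pi_2/\pi_3 \;\cong\; (F_2/F_3) \big/ \langle [\ell_1],\ldots,[\ell_m]\rangle,
\]
where the right-hand side is a quotient of finitely generated abelian groups.

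Next I would use the standard basis: $F_2/F_3$ is free abelian of rank $\binom{m}{2}$ on the basic commutators $[x_i,x_j]$, $1\le i<j\le m$, with $[x_j,x_i]=-[x_i,x_j]$ in additive notation. A classical Magnus-expansion (or Fox-calculus) calculation yields
\[
  [\ell_i] \;=\; \sum_{j\ne i} \lk(L_i,L_j)\,[x_i,x_j] \in F_2/F_3.
\]
The crucial observation for (1) is the single relation $\sum_{i=1}^m [\ell_i]=0$ in $F_2/F_3$: for each pair $i<j$, the $[x_i,x_j]$-coefficient in $[\ell_i]+[\ell_j]$ is $\lk(L_i,L_j)+\lk(L_j,L_i)\cdot(-1)=0$ by symmetry of the linking number. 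Therefore the subgroup $\langle [\ell_i]\rangle \subseteq F_2/F_3$ has rank at most $m-1$, giving
\[
  \operatorname{rank}(\pi_2/\pi_3) \;\ge\; \binom{m}{2} - (m-1) \;=\; \frac{(m-1)(m-2)}{2}.
\]
For (2), with $m=2$ the group $F_2/F_3$ is infinite cyclic on $[x_1,x_2]$, $[\ell_1]=\lk(L)[x_1,x_2]$ and $[\ell_2]=-\lk(L)[x_1,x_2]$, so the quotient is $\Z/\lk(L)\Z$. The ``consequently'' clause is then immediate, using $(m-1)(m-2)/2\ge 1$ for $m\ge 3$ together with $\Z/k\Z=0 \iff |k|=1$.

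I do not anticipate any serious obstacle; the lemma is essentially a piece of commutator bookkeeping layered on top of Milnor's nilpotent presentation. The one step that warrants care is invoking Milnor's theorem correctly to identify the kernel of $F/F_3 \twoheadrightarrow \pi/\pi_3$ as exactly the normal closure of the longitude words modulo~$F_3$, but this is a well-documented consequence of Milnor's original work on link groups and does not require anything beyond the $q=3$ case.
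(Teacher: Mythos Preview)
Your overall strategy matches the paper's, but you have misstated Milnor's theorem, and this creates a real gap. Milnor's presentation \cite[Theorem~4]{Milnor:1957-1} is
\[
  \pi/\pi_3 \;\cong\; \langle x_1,\ldots,x_m \mid [x_1,\ell_1],\ldots,[x_m,\ell_m],\, F_3\rangle,
\]
so the relators are the commutators $[x_i,\ell_i]$ of meridians with longitudes, \emph{not} the longitude words $\ell_i$ themselves. Longitudes are not killed in the link exterior (only in the zero-surgery), and in fact $\ell_i \equiv \prod_{j\ne i} x_j^{\lk(L_i,L_j)}$ modulo $F_2$; thus your claim ``each $\ell_i$ lies in $F_2$'' fails whenever some linking number is nonzero. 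In particular, for $m=2$ with $\lk(L)\ne 0$ --- precisely the case needed for part~(2) --- the expression ``$[\ell_i]\in F_2/F_3$'' is not even defined, and a Magnus-expansion reading of the longitude would in any case produce length-$3$ $\overline\mu$-invariants rather than pairwise linking numbers.

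The repair is mechanical: replace $\ell_i$ by $[x_i,\ell_i]$ throughout. These elements genuinely lie in $F_2$, and the identity $[a,bc]\equiv [a,b][a,c]$ mod $F_3$, applied to $\ell_i\equiv \prod_{j\ne i} x_j^{\lk(L_i,L_j)}$ mod $F_2$, gives exactly
\[
  [x_i,\ell_i] \;=\; \sum_{j\ne i}\lk(L_i,L_j)\,[x_i,x_j] \quad\text{in } F_2/F_3,
\]
which is the formula you wrote down (just attributed to the wrong element). After this correction your linear-dependence observation $\sum_i [x_i,\ell_i]=0$, the resulting rank bound $\binom{m}{2}-(m-1)$, and the $m=2$ computation all go through exactly as written, and the argument is then essentially identical to the paper's own proof.
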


\begin{proof}
  Milnor \cite[Theorem~4]{Milnor:1957-1} showed that $\pi/\pi_3$ is
  presented by
  \[
  \pi/\pi_3 = \langle x_1,\ldots,x_m  \mid [x_1,\lambda_1], \ldots,
  [x_m,\lambda_m], F_3 \rangle
  \]
  where $m$ is the number of components of $L$, $F_3$ is the 3rd lower
  central subgroup of the free group $F$ on $x_1,\ldots,x_m$, and
  $\lambda_i$ is an element in $F$ which represents the $i$th
  longitude of $L$ in~$\pi/\pi_3$.  It is well known that $F_2/F_3$ is
  the free abelian group generated by the basic commutators $[x_i,
  x_j]$, $i<j$, by Hall's basis theorem.  Also, we have $\lambda_i
  \equiv \prod_{j\ne i} x_j^{\ell_{ij}}$ mod $F_2$, where $\ell_{ij}$
  is the linking number of the $i$th and $j$th components of~$L$.
  Using the standard identities $[a,bc] \equiv [a,b][a,c]$ mod $F_3$
  and $[a,b]^{-1}=[b,a]$, we obtain
  \[
  [x_i,\lambda_i] 
  \equiv
  [x_1,x_i]^{-\ell_{1i}} \cdots [x_{i-1},x_i]^{-\ell_{(i-1)i}}
  [x_i,x_{i+1}]^{\ell_{i(i+1)}} \cdots [x_i,x_m]^{\ell_{im}} \text{
    mod } F_3.
  \]
  From this it follows that $\pi_2/\pi_3$ is given by the abelian
  group presentation with $m(m-1)/2$ generators $v_{ij}=[x_i,x_j]$,
  $1\le i < j \le m$, and the following $m$ relators for
  $i=1,\ldots,m$:
  \[
  -\ell_{i1}v_{1i} -\cdots
  -\ell_{(i-1)i}v_{(i-1)i} + \ell_{i(i+1)}v_{i(i+1)} + \cdots
  \ell_{im}v_{im} = 0
  \]
  Note that the $m$ relators add up to zero.  Therefore the rank of
  $\pi_2/\pi_3$ is at least $m(m-1)/2-(m-1) = (m-1)(m-2)/2$.

  For $m=2$, then we have one generator $v_{12}$ and one relator
  $\ell_{12}v_{12}=0$.  Therefore $\pi_2/\pi_3 \cong \Z/\ell_{12} \Z$.
\end{proof}

In the proof of Theorem~\ref{theorem:main-result-for-nontrivial-lcs}
we will make use of the following definition.

\begin{definition}
  For a group $G$ and a sequence $\cP = (R_1, R_2, \ldots)$ of
  commutative rings $R_i$ with unity, we define the
  \emph{mixed-coefficient lower central series} $\{\cP_q G\}$ by
  $\cP_1 G :=G$ and
  \[
  \cP_{q+1}G := \Ker\Big\{\cP_q G \to \frac{\cP_q G}{[G,\cP_q G]}
  \otimesover{\Z} R_q \Big\}.
  \]
\end{definition}

We remark that $\cP_q G$ is a characteristic normal subgroup of~$G$.
We will also make use of the following result from~\cite{Cha:2012-1}:

  \begin{theorem}
    [{A special case of the Amenable Signature Theorem
      \cite[Theorem~3.2]{Cha:2012-1}}]
    \label{theorem:amenable-signature-theorem}
    Suppose $W$ is a $2$-solvable cobordism between bordered
    3-manifolds $M$ and $M'$. Suppose $\Gamma$ is a group which admits
    a filtration $\{e\}\subset \Gamma' \subset \Gamma$ such that
    $\Gamma/\Gamma'$ is torsion-free abelian and such that $\Gamma'$
    is either torsion-free abelian or an abelian $p$-group for some
    prime $p$.  Then for any $\phi\colon \pi_1(M\cup_\partial -M') \to
    \Gamma$ that extends to $\pi_1(W)$, $\rhot(M\cup_\partial
    -M',\phi)=0$.
  \end{theorem}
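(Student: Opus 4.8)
The plan is to run the standard bounding-$4$-manifold computation of the Cheeger--Gromov $\rhot$-invariant, with the hypotheses on $\Gamma$ supplying the algebraic control that makes it go through. Write $N:=M\cup_\partial -M'$, a closed oriented $3$-manifold, and let $\psi\colon\pi:=\pi_1(W)\to\Gamma$ denote the extension of $\phi$. I would begin with two preliminary observations. First, a group $\Gamma$ carrying a filtration $\{e\}\subset\Gamma'\subset\Gamma$ of the stated form is metabelian, hence amenable, and satisfies $\Gamma^{(2)}\subseteq[\Gamma',\Gamma']=\{e\}$; consequently $\psi$ factors through the projection $\pi\to\pi/\pi^{(2)}$. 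Second, by a criterion of Strebel such a $\Gamma$ lies in Strebel's class $D(R)$, with $R=\Q$ when $\Gamma'$ is torsion-free abelian and $R=\Z/p\Z$ when $\Gamma'$ is an abelian $p$-group (see the discussion in \cite{Cha-Orr:2009-01}). Thus $\Gamma$ is amenable and belongs to $D(R)$ for the appropriate $R$, which is exactly the hypothesis under which the $\Lt$-dimension and $\rhot$-technology of \cite{Cha-Orr:2009-01} is designed to operate.

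Since $(N,\phi)$ bounds $(W,\psi)$, Atiyah's $\Lt$-index theorem for bounded $4$-manifolds (see \cite[Section~5]{Cochran-Orr-Teichner:1999-1}, and \cite{Cheeger-Gromov:1985-1,Cochran-Teichner:2003-1}) yields
\[
\rhot(N,\phi)=\lsign_\Gamma(W,\psi)-\sign(W),
\]
where $\sign(W)$ is the ordinary signature and $\lsign_\Gamma(W,\psi)$ is the $\Lt$-signature of the $\mathcal{N}\Gamma$-valued intersection form on $H_2(W;\mathcal{N}\Gamma)$. So it suffices to prove that $\sign(W)=0$ and that $\lsign_\Gamma(W,\psi)=0$.

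Both vanishings I would deduce from the ``lagrangian with duals'' packaged into the definition of a $2$-solvable cobordism: classes $\ell_1,\dots,\ell_m,d_1,\dots,d_m$ that form a basis of $H_2(W;\Z)$ (so $b_2(W)=2m$) and whose lifts to $H_2(W;\Z[\pi/\pi^{(2)}])$ satisfy $\lambda_2(\ell_i,\ell_j)=0$ and $\lambda_2(\ell_i,d_j)=\delta_{ij}$. Pushing forward along the augmentation $\Z[\pi/\pi^{(2)}]\to\Z$, the images of the $\ell_i$ span a half-rank subspace of $H_2(W;\Z)$ on which the intersection form vanishes, with the $d_i$ as duals; hence the ordinary intersection form of $W$ is hyperbolic and $\sign(W)=0$. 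For the $\Lt$-signature I would push forward instead along the ring homomorphism $\Z[\pi/\pi^{(2)}]\to\Z\Gamma\to\mathcal{N}\Gamma$ induced by $\psi$ (legitimate by the first preliminary): the images $\wt\ell_i\in H_2(W;\mathcal{N}\Gamma)$ generate a submodule on which the $\mathcal{N}\Gamma$-intersection form vanishes --- its values $\lambda^{(2)}_\Gamma(\wt\ell_i,\wt\ell_j)$ are the images of $\lambda_2(\ell_i,\ell_j)=0$ --- while the $\wt d_i$ pair with them by $\delta_{ij}$. Provided this self-annihilating submodule carries exactly half of the von Neumann $\Gamma$-dimension of $H_2(W;\mathcal{N}\Gamma)$, the form is metabolic, and a metabolic hermitian form over a finite von Neumann algebra has vanishing $\Lt$-signature (cf.\ \cite[Section~5]{Cochran-Orr-Teichner:1999-1}); thus $\lsign_\Gamma(W,\psi)=0$ and the theorem follows.

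The single substantive point --- and the step I expect to be the main obstacle --- is the half-dimension assertion: that $\ldim_\Gamma H_2(W;\mathcal{N}\Gamma)=2m$, i.e.\ that replacing $\Z[\pi/\pi^{(2)}]$-coefficients by $\mathcal{N}\Gamma$-coefficients does not perturb the rank count witnessed by the $\ell_i,d_i$. This is precisely where \emph{both} the amenability of $\Gamma$ and its membership in $D(R)$ are needed. My plan for it is: reduce first to $R[\pi/\pi^{(2)}]$-coefficients; use the hypothesis $H_1(M)\cong H_1(W)\cong H_1(M')$ and the structure of $\Gamma$ to see that $H_0(W;\mathcal{N}\Gamma)$ and $H_1(W;\mathcal{N}\Gamma)$ contribute nothing to the relevant count; note that the Euler characteristic $\chi(W)$ equals the alternating sum of the von Neumann $\Gamma$-dimensions $\ldim_\Gamma H_k(W;\mathcal{N}\Gamma)$ because $\Gamma$ is amenable; and apply Strebel's theorem --- which controls how projective $R\Gamma$-resolutions behave under $-\otimes_{R\Gamma}R$ --- together with the additivity and continuity of $\ldim_\Gamma$ over amenable groups to conclude that the higher $\Tor$-terms appearing in the universal-coefficient spectral sequence have $\Gamma$-dimension zero. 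Once this rank identity is in hand the argument above closes up; what results is exactly the special case recorded as \cite[Theorem~3.2]{Cha:2012-1}, which rests on the general amenable signature theorem of \cite{Cha-Orr:2009-01}.
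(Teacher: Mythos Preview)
Your first paragraph is essentially the paper's entire proof: the paper observes that $\Gamma$ is solvable (hence amenable), invokes \cite[Lemma~6.8]{Cha-Orr:2009-01} to place $\Gamma$ in Strebel's class $D(R)$ for $R=\Q$ or $\Z_p$, notes that $\Gamma^{(2)}=\{e\}$, and then simply cites case~III of the Amenable Signature Theorem \cite[Theorem~3.2]{Cha:2012-1}. That is all it does---the result is stated as a special case of that theorem, and the proof only verifies its hypotheses.

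Your remaining paragraphs go further and sketch \emph{why} that theorem holds in this situation: the bounding formula $\rhot(N,\phi)=\lsign_\Gamma(W,\psi)-\sign(W)$, the vanishing of both signatures via the lagrangian-with-duals, and the half-dimension count using amenability and $D(R)$. This outline is accurate in spirit and is indeed how the argument in \cite{Cha:2012-1,Cha-Orr:2009-01} proceeds, so nothing is wrong; it simply unpacks machinery that the paper is content to cite as a black box. One small caveat: you assume that the images of the $\ell_i,d_j$ form a basis of $H_2(W;\Z)$, i.e.\ $2m=b_2(W)$. This is standard in the full definition of an $n$-solvable cobordism in \cite{Cha:2012-1} (and in Cochran--Orr--Teichner), but it is not explicit in the abbreviated Definition~\ref{definition:solvable-cobordism} recorded in this paper, so if you were writing this up you would want to point to the full definition for that hypothesis.
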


\begin{proof}
  First note that $\Gamma$ is a solvable group and therefore amenable.
  Furthermore it follows from \cite[Lemma~6.8]{Cha-Orr:2009-01} that
  $\Gamma$ lies in Strebel's class $D(R)$ \cite{Strebel:1974-1} for
  $R=\Q$ or $R=\Z_p$, with $p$ a prime.  The theorem is now an
  immediate consequence of case III of the Amenable Signature
  Theorem~\cite[Theorem~3.2]{Cha:2012-1}, since $\G^{(2)}= \{e\}$ and
  $W$ is a $2$-solvable cobordism.
\end{proof}

We are now ready to give the proof of
Theorem~\ref{theorem:main-result-for-nontrivial-lcs}.  If the reader
is interested in link concordance only, then in the proof below the
phrase ``$2$-solvable cobordism'' can be safely replaced with
``concordance exterior.''

\begin{proof}[Proof of Theorem~\ref{theorem:main-result-for-nontrivial-lcs}]
  By the hypothesis, there exists a simple closed curve $\alpha$ in
  $E_L$ which is a generator of the abelian
  group~$\pi_2/\pi_3$.  We can and will assume that $\alpha$ is
  unknotted in~$S^3$.  We then choose a prime $p$ which divides the
  order of $\alpha$ in~$\pi_2/\pi_3$; if $\alpha$ has infinite
  order, choose any prime~$p$.

  Let $\cP=(\Q,\Z_p)$, so that $\cP_q G$ is defined for $q=1,2,3$.
  Then, for not only the given link group $\pi=\pi_1(E_L)$ but also
  any $\pi$ with $\pi/[\pi,\pi]$ torsion free, $\cP_2\pi$ is the
  ordinary lower central subgroup $\pi_2=[\pi,\pi]$.  Also,
  $\cP_2\pi/\cP_3\pi \cong (\pi_2/\pi_3)\otimes \Z_p$, a $\Z_p$-vector
  space.  Consequently, for the given $\pi=\pi_1(E_L)$, our $\alpha$
  represents a nonzero element in~$\cP_2\pi/\cP_3\pi \subset
  \pi/\cP_3\pi$, namely an element of order~$p$.

  According to Cheeger and Gromov~\cite[p.~23]{Cheeger-Gromov:1985-1}
  there is a constant $R > 0$ determined by the 3-manifold $E_{L}
  \cup_\partial -E_L$ such that $|\rhot(E_{L} \cup_\partial
  -E_L,\Phi)| < R$ for any homomorphism~$\Phi$.  Let us choose knots
  $J_{i}$ inductively for $i=1,2,\ldots$ in such a way that the
  inequality
  \begin{equation}\label{equ:1}
  |\rhot(J_i,\Z_p)| > R + |\rhot(J_j,\Z_p)|
  \end{equation}
  is satisfied whenever $i>j$.
  Here, given a knot $J$ and $p\in \Z$ we write
  \[
  \rhot(J,\Z_p):=\rhot(\mbox{$0$-framed surgery on $J$},\mbox{unique
    epimorphism onto }\Z_p).
  \]
  For example, the connected sum of a sufficiently large number of
  trefoils can be taken as~$J_i$.  To see this, denote the right
  handed trefoil by $T$ and set $C:=\frac{1}{p}\sum_{r=0}^{p-1}
  \sigma_T(e^{2\pi r\sqrt{-1}/p})$, where $\sigma_T(z)$ denotes the
  Levine-Tristram signature of $T$ corresponding to $z\in S^1$.  It is
  straightforward to see that $C> 0$, since at least one of the
  $\sigma_T(z)$ is positive, and all non-zero $\sigma_T(z)$ have the
  same sign.  If $J$ is the connected sum of $k$ copies of $T$ we see
  that
  \begin{equation}
    \rhot(J,\Z_p)=\frac{1}{p}\sum_{r=0}^{p-1} \sigma_{J}(e^{2\pi
      r\sqrt{-1}/p})
    =k\cdot C.
  \end{equation}
  Here, for the first equality we appeal to
  \cite[Corollary~4.3]{Friedl:2003-4} or equivalently
  \cite[Lemma~8.7~(2)]{Cha-Orr:2009-01} and for the second equality we
  use the additivity of the Levine-Tristram signatures.  If we now
  denote by $J_i$ the connected sum of $i\cdot \lceil {R/C}\rceil$
  copies of $T$, then (\ref{equ:1}) is clearly satisfied.

  We then use the satellite construction to produce a new link $L_i :=
  L(\alpha,J_i)$ by tying the knot $J_i$ into $L$ along the
  curve~$\alpha$.  More precisely, by filling in the exterior
  $E_{\alpha}= S^3-\nu (\alpha)$ with the exterior
  $E_{J_{i}}=S^3-\nu(J_{i})$ along an orientation reversing
  homeomorphism of the boundary torus $\partial\nu(\alpha)
  \to \partial\nu(J_{i})$ that identifies a meridian and 0-linking
  longitude of $\alpha$ with a 0-linking longitude and a meridian of
  $J_{i}$ respectively, we obtain a new 3-manifold which is
  homeomorphic to $S^3$, and the image of $L\subset E_{\alpha}$ under
  this homeomorphism is the new link $L_i = L(\alpha,J_{i})$.  We
  denote a 0-framed push-off of $\alpha$ in $E_{L \cup \alpha} \subset
  E_{L_i}$ by~$\alpha_i$.

  It is well known that there is an integral homology equivalence
  $f\colon (E_{L_i},\partial E_{L_i}) \to (E_{L},\partial E_{L})$ (see
  e.g.\ \cite[Lemma~5.3]{Cha-Orr:2011-01}).  In fact $f$ is obtained
  by gluing the identity map on $E_{L\cup \alpha}$ with the standard
  homology equivalence $(E_{J_{i}},\partial E_{J_{i}}) \to
  (E_{J_{0}},\partial E_{J_{0}})=S^1\times (D^2,S^1)$.  Since $\alpha$
  lies in $[\pi_1(E_L),\pi_1(E_L)]$, a Mayer-Vietoris argument applied
  to the above construction shows that $f$ induces isomorphisms on
  $H_*(-;\Z[\Z^m])$.

  By Stallings' theorem \cite{Stallings:1965-1} and our above
  discussion on $\cP_q\pi/\cP_{q+1}\pi$ for $q< 3$, we have an induced
  isomorphism $\pi_1(E_{L_i})/\cP_3 \pi_1(E_{L_i})\cong \pi/\cP_3
  \pi$.  Since $f$ restricts to the identity on $E_{L\cup\alpha}$, the
  element $\alpha_i$ corresponds to $\alpha$ under this isomorphism.

  We will need the following lemma which is a consequence of Dwyer's
  Theorem~\cite{Dwyer:1975-1}, a generalization of Stallings' Theorem.
  We remark that for the special case of a concordance exterior,
  Stallings' Theorem can be used instead.

  \begin{lemma}
    \label{lemma:dwyer-for-solvable-cobordism}
    If $W$ is a $1$-solvable cobordism between two bordered
    3-manifolds $M$ and $M'$ with torsion-free $H_1(M)$, then the
    inclusions induce isomorphisms
    \[
    \pi_1(M)/\cP_{3}\pi_1(M) \cong
    \pi_1(W)/\cP_{3}\pi_1(W) \cong
    \pi_1(M')/\cP_{3}\pi_1(M').
    \]
  \end{lemma}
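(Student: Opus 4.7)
The plan is to apply a mixed-coefficient generalization of Dwyer's theorem \cite{Dwyer:1975-1} to the inclusions $M \hookrightarrow W$ and $M' \hookrightarrow W$; such a generalization, adapted to sequences $\cP = (R_1, R_2, \ldots)$, is developed in \cite{Cha:2012-1}. By the symmetry of the statement it suffices to treat the inclusion $M \hookrightarrow W$.

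First I would unpack the groups $\cP_q G$ for $q = 2,3$. Since $H_1(M)$ is torsion-free and, by the definition of a $1$-solvable cobordism, the inclusions induce isomorphisms $H_1(M) \cong H_1(W) \cong H_1(M')$, all three abelianizations are torsion-free. Combined with $R_1 = \Q$, this forces $\cP_2 G = [G,G]$ for each of the three fundamental groups $G$, and so $G/\cP_3 G$ fits in an extension
\[
0 \longrightarrow (G_2/G_3) \otimes \Z_p \longrightarrow G/\cP_3 G \longrightarrow H_1(G) \longrightarrow 0,
\]
where $G_q$ denotes the ordinary lower central series. The $H_1$-quotient on the right is already identified under the inclusion by $1$-solvability, so the remaining task is to match the left-hand term.

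For this I would invoke Dwyer's criterion in its mixed-coefficient form: an inclusion induces an isomorphism on $\pi_1/\cP_3 \pi_1$ provided (i) it induces an iso on $H_1$ and (ii) it induces a surjection on $H_2$ modulo a Dwyer subgroup $\phi_2$ that records the $\Z_p$-reduction at the $G_2/G_3$ level. Condition (i) is immediate from $1$-solvability. For condition (ii), the key ingredient is the $1$-solvable cobordism data: it provides a lagrangian $\ell_i$ with duals $d_i$ in $H_2(W; \Z[\pi_1(W)/\pi_1(W)^{(1)}]) = H_2(W; \Z[H_1(W)])$. A twisted Poincar\'e--Lefschetz duality argument for $(W,\partial W)$, combined with this lagrangian-with-duals structure, is meant to show that any integral class in $H_2(W;\Z)$ is, modulo the image of $H_2(M;\Z)$, a sum of the $d_i$'s; these dual classes lie in $\phi_2$ once the $\Z_p$-tensor is applied, so condition (ii) is satisfied.

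The main obstacle is this last step: carefully pushing the $\Z[H_1(W)]$-level lagrangian-duals data through twisted duality to verify Dwyer's $H_2$-condition with the correct $\Z_p$-bookkeeping. Once the criterion is verified, applying it simultaneously to $M\hookrightarrow W$ and $M'\hookrightarrow W$ yields the chain of isomorphisms claimed in the lemma.
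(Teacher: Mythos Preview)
Your overall strategy is sound, but it is more elaborate than necessary, and the paper's proof is simpler in two respects.

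First, the paper does \emph{not} invoke a mixed-coefficient version of Dwyer's theorem. It applies the classical Dwyer theorem with integer coefficients to the inclusions $M\hookrightarrow W\hookleftarrow M'$ to obtain isomorphisms on the ordinary lower-central graded pieces $\pi_1(-)_q/\pi_1(-)_{q+1}$ for $q=1,2$. Only afterwards does it use the observation you already made, that for groups with torsion-free abelianization one has $\cP_2\pi=\pi_2$ and $\cP_2\pi/\cP_3\pi=(\pi_2/\pi_3)\otimes\Z_p$, to convert these into isomorphisms on the $\cP$-graded pieces; the statement about $\pi/\cP_3\pi$ then follows from the five lemma applied to your extension. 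So tensoring with $\Z_p$ happens \emph{after} Dwyer, not inside it.

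Second, and more importantly, what you flag as the ``main obstacle''---a twisted Poincar\'e--Lefschetz duality argument showing that classes in $H_2(W;\Z)$ are sums of the $d_i$ modulo the image of $H_2(M;\Z)$---is both unnecessary and slightly off. The point is that in the definition of a $1$-solvable cobordism (following Cochran--Orr--Teichner), the $\ell_i$ \emph{and} $d_j$ together are required to generate $H_2(W;\Z)$; this is part of the package, not something to be derived via duality. Since these classes lift to $H_2(W;\Z[\pi/\pi^{(1)}])=H_2(W;\Z[\pi/\pi_2])$ by hypothesis, the quotient $H_2(W)/\Im\{H_2(W;\Z[\pi/\pi_2])\to H_2(W)\}$ vanishes, and Dwyer's $H_2$-surjectivity condition is satisfied trivially---the image of $H_2(M)$ plays no role. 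Your route would work too once corrected (you need both the $\ell_i$ and the $d_j$, not just the duals), but the argument is one line rather than a duality computation.
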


  \begin{proof}
    Recall Dwyer's theorem~\cite{Dwyer:1975-1}: if $f\colon X\to Y$
    induces an isomorphism $H_1(X)\cong H_1(Y)$ and an epimorphism
    \[
    H_2(X)\to H_2(Y)/\Im \{ H_2(Y;\Z[\pi_1(W)/\pi_1(W)_q]) \rightarrow
    H_2(Y)\},
    \]
    then $f$ induces an isomorphism $\pi_1(X)_q/\pi_1(X)_{q+1} \cong
    \pi_1(Y)_q/\pi_1(Y)_{q+1}$.

    In our case, by the definition of an $n$-solvable cobordism, we
    have $H_1(M)\cong H_1(W)\cong H_1(M')$.  Also, there are
    $1$-lagrangian elements $\ell_1,\ldots,\ell_r$ with $1$-duals
    $d_1,\ldots,d_r$ lying in $H_2(W;\Z[\pi_1(W)/\pi_1(W)^{(1)}])$
    such that the $\ell_i$ and $d_j$ generate~$H_2(W)$.  Since
    $\pi_1(W)^{(1)}$ is equal to $\pi_1(W)_{2}$, the $H_2$ condition
    of Dwyer's theorem is satisfied.  Therefore it follows that
    \[
    \pi_1(M)_q/\pi_1(M)_{q+1} \cong
    \pi_1(W)_q/\pi_1(W)_{q+1} \cong
    \pi_1(M')_q/\pi_1(M')_{q+1}
    \]
    for $q=1, 2$ by Dwyer's theorem.  By our observation that $\cP_2
    \pi=\pi_2$ and $\cP_2\pi/\cP_3\pi =\pi_2/\pi_3 \otimes_{\Z} \Z_p$
    for groups $\pi$ with torsion free $H_1$, we obtain
    \[
    \cP_q\pi_1(M)/\cP_{q+1}\pi_1(M) \cong
    \cP_q\pi_1(W)/\cP_{q+1}\pi_1(W) \cong
    \cP_q\pi_1(M')/\cP_{q+1}\pi_1(M')
    \]
    for $q=1,2$.  From this the desired conclusion follows by the five
    lemma.
  \end{proof}

  Returning to the proof of
  Theorem~\ref{theorem:main-result-for-nontrivial-lcs} let $W$ be a
  2-solvable cobordism between $E_{L_i}$ and~$E_{L_j}$. We will show
  that $i=j$.  First note that we obtain
  \[
  \pi/\cP_3\pi \cong \pi_1(E_{L_i})/\cP_3\pi_1(E_{L_i}) \cong
  \pi_1(W)/\cP_3\pi_1(W) \cong \pi_1(E_{L_j})/\cP_3\pi_1(E_{L_j})
  \]
  by Lemma~\ref{lemma:dwyer-for-solvable-cobordism}.  Let $\phi\colon
  \pi_1(W) \to \Gamma:=\pi_1(W)/\cP_3\pi_1(W)$ be the projection, and
  by abuse of notation, we denote its restriction to $\partial
  W=E_{L_i} \cup_\partial -E_{L_j}$ by $\phi$ as well.  Since $\alpha$
  represents an order $p$ element in $\pi/\cP_3\pi$, both
  $\phi([\alpha_i])$ and $\phi([\alpha_j])$ have order~$p$.

  By applying Theorem~\ref{theorem:amenable-signature-theorem} with $n=2$,
  we obtain that 
  \begin{equation}
    \label{equ:rhozero}
    \rhot(E_{L_i} \cup_\partial-E_{L_j},\phi)=0.
  \end{equation}

  On the other hand, note that the map $\phi$ induces a homomorphism
  $\varphi \colon \pi_1(E_L \cup_{\partial} -E_L) \to \Gamma$ as
  follows.  Recall that $E_{L_i} \cup_\partial -E_{L_j}$ is obtained
  from $E_{L} \cup_\partial -E_L$ by satellite constructions using the
  knots $J_{i}$ and~$J_{j}$.  Viewing $E_{J_{i}}$ as a subspace of
  $E_{L_i} \cup_\partial -E_{L_j}$, the homomorphism $\phi$ restricted
  to $\pi_1(E_{J_{i}})$ sends the meridian of $J_{i}$
  to~$\phi([\alpha_{i}])$.  Since $\phi([\alpha_{i}])$ has order $p$
  in the abelian subgroup $\cP_2\pi/\cP_3\pi$ of $\Gamma$, it follows
  that $\phi$ restricted to $E_{J_{i}}$ factors as $\pi_1(E_{J_i}) \to
  \Z \twoheadrightarrow \Z_p \hookrightarrow \Gamma$ where the first
  map is the abelianization.  Similarly for~$J_{j}$.  It follows that
  $\phi$ on $\pi_1(E_{L_i}\cup_\partial -E_{L_j})$ gives rise to a
  homomorphism $\varphi \colon \pi_1(E_{L}\cup_\partial -E_{L}) \to
  \Gamma$.  To see this, observe that we can arrange an element
  $\gamma \in \pi_1(E_L \cup_{\partial} E_L)$ to avoid $\nu(\a_{i})$
  and $\nu(\a_{j})$.  The image $\varphi(\gamma)$ can then be defined
  by $\phi$.  This is well-defined because crossing $\a_{i}$ in
  homotopy of $\gamma$ changes $\gamma$ by a meridian of
  $\nu(\a_{i})$, which in $E_{L_i}$ is attached to a longitude of
  $J_{i}$, and therefore maps trivially under~$\phi$.

  Now, using (i) the additivity of $\rhot$ under satellite
  construction \cite[Proposition~3.2]{Cochran-Orr-Teichner:2002-1},
  (ii) the $L^2$-induction property of $\rhot$
  \cite[Proposition~5.13]{Cochran-Orr-Teichner:1999-1}, and (iii) the
  fact that $\alpha$ represents an element of order~$p$ in $\Gamma$,
  we obtain that
  \begin{equation}\label{equ:3}
    \rhot(E_{L_i} \cup_\partial     -E_{L_j},\phi) =
    \rhot(E_{L} \cup_\partial-E_L,\varphi) + \rhot(J_i,\Z_p) - \rhot(J_j,\Z_p).
  \end{equation}
  It now follows from (\ref{equ:rhozero}) and the choice of $R$ that
  \[
  \big|\,|\rhot(J_i,\Z_p)| - |\rhot(J_j,\Z_p)|\,\big| <R.
  \]
  In light of (\ref{equ:1}) we now see that $i=j$.
 
  Thus, when $i \ne j$, we have shown that the bordered manifolds
  $E_{L_i}$ and $E_{L_{j}}$ are not 2-solvably cobordant.
\end{proof}

\section{Links with nontrivial Blanchfield pairing}
\label{section:2-comp-links-with-nonzero-lk}

According to Lemma \ref{lemma:rank-of-3rd-lower-central-quotient} the
fundamental group of a $2$-component link with linking number equal to
$\pm 1$ admits no non-abelian nilpotent quotients.  The goal of this
section is to provide an alternative approach, using the Blanchfield
duality, to prove Theorem~\ref{maintheorem:full} for such links.

In this section we denote $\Q[\Z^m]$ by~$\Lambda$, where $m$ is
understood to be the number of components of the link.  The ring
$\Q[\Z^m]$ has the same quotient field as $\Z[\Z^m]$, namely the
rational function field $Q=\Q(x_1,\ldots,x_m)$ considered in
Section~\ref{section:observations-on-main-theorem}.  Recall that for a
$\Lambda$-module $A$ we denote the $\Lambda$-torsion submodule by
$tA$.  In what follows $\Bl_L$ denotes the \emph{rational} Blanchfield
form $\Bl_L\colon tH_1(E_{L};\Lambda)\times tH_1(E_{L};\Lambda) \to
Q/\Lambda$.  We remark that this Blanchfield pairing is obtained by
tensoring the integral Blanchfield pairing discussed in
Section~\ref{section:observations-on-main-theorem} with~$\Q$.

\begin{theorem}
  \label{theorem:main-result-for-nonzero-blanchfield}
  Suppose $L$ is an $m$-component link for which the Blanchfield
  pairing $\Bl_L$ is not constantly zero, i.e., $\Bl_L(x,y)\ne 0$ for
  some $x, y \in tH_1(E_L;\Lambda)$.  Then there are infinitely many
  links $L=L_0,L_1,L_2,\ldots$ such that there is a
  $\Z[\Z^m]$-homology equivalence of $f\colon (E_{L_i},\partial
  E_{L_i}) \to (E_{L_0}, \partial E_{L_0})$ rel $\partial$ for each
  $i$ but the exteriors $E_{L_i}$ and $E_{L_j}$ are not 2-solvably
  cobordant \textnormal{(}and consequently the links $L_i$ and $L_j$
  are not concordant\textnormal{)} for any $i\ne j$.
\end{theorem}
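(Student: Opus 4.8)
The plan is to mirror the strategy of the nilpotent case (Theorem~\ref{theorem:main-result-for-nontrivial-lcs}), replacing the nilpotent quotient $\pi/\cP_3\pi$ with a metabelian-type solvable quotient built from the Blanchfield pairing. Concretely, I would first extract from the hypothesis $\Bl_L \ne 0$ a simple closed curve $\alpha \subset E_L$, unknotted in $S^3$, representing an element $a \in tH_1(E_L;\Lambda)$ with $\Bl_L(a,a) \ne 0$ (a standard symmetrization/self-pairing argument shows one can arrange the self-pairing to be nonzero, or else one pairs two curves and uses a slight variant). Pick a prime $p$ appearing in the ``denominator'' of $\Bl_L(a,a) \in Q/\Lambda$, so that reducing coefficients mod $p$ keeps the pairing nonzero. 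Then form $L_i := L(\alpha, J_i)$ by the satellite construction with knots $J_i$ (e.g.\ connected sums of trefoils) chosen inductively so that $|\rhot(J_i,\Z_p)| > R + |\rhot(J_j,\Z_p)|$ for $i > j$, where $R$ is the Cheeger--Gromov bound for $E_L \cup_\partial -E_L$. Since $\alpha$ is null-homologous in $E_L$, the same Mayer--Vietoris argument as before gives a $\Z[\Z^m]$-homology equivalence $f\colon (E_{L_i},\partial E_{L_i}) \to (E_{L_0},\partial E_{L_0})$ rel $\partial$, and in particular the rational Blanchfield form of $L_i$ is identified with that of $L_0$, with $\alpha_i$ (a $0$-framed pushoff of $\alpha$) corresponding to $a$.

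Next I would build the representation. The natural target is a solvable group of the form $\Gamma = (\Lambda/(\text{something}) \rtimes \Z^m)$, or more precisely the metabelian quotient $\pi_1(E_{L_0})/\pi_1(E_{L_0})''$ followed by a further reduction so that the relevant piece of the Alexander/link module becomes a finite $p$-group: one wants $\Gamma$ to fit the hypotheses of Theorem~\ref{theorem:amenable-signature-theorem}, i.e.\ a filtration $\{e\} \subset \Gamma' \subset \Gamma$ with $\Gamma/\Gamma'$ torsion-free abelian (this will be $\Z^m$) and $\Gamma'$ an abelian $p$-group (a quotient of the $p$-torsion of the link module). The key functoriality input is that a $2$-solvable cobordism $W$ between $E_{L_i}$ and $E_{L_j}$ induces, via the first-derived-quotient part of the $n$-solvable condition together with the Blanchfield duality (here is where one invokes the self-annihilating-submodule/lagrangian structure of $H_2(W)$ over $\Z[\pi/\pi^{(1)}]$), an isomorphism on $H_1(-;\Lambda)$ modulo $p$-power order, well enough that the chosen representation $\phi\colon \pi_1(W) \to \Gamma$ restricts compatibly to both ends and sends $\alpha_i$, $\alpha_j$ to order-$p$ elements. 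This is the solvable analogue of Lemma~\ref{lemma:dwyer-for-solvable-cobordism}, and I expect it to rest on an argument in the spirit of Cochran--Orr--Teichner's ``the image of $H_1$ of a $\rho$-null-bordant piece lies in the kernel'' lemmas, adapted to links and to the $\Z[\Z^m]$-coefficient Blanchfield setting as in \cite{Friedl-Powell:2011-1,Cha:2012-1}.

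With the representation in hand, the endgame is identical to the nilpotent case. Theorem~\ref{theorem:amenable-signature-theorem} applied to $W$ and $\phi$ gives $\rhot(E_{L_i} \cup_\partial -E_{L_j}, \phi) = 0$. On the other hand, since $\phi$ kills the longitude of each $J_k$ inside $E_{L_k}$ (because $\phi([\alpha_k])$ has order $p$, so $\phi$ restricted to $E_{J_k}$ factors through $\Z \twoheadrightarrow \Z_p \hookrightarrow \Gamma$), the satellite additivity of $\rhot$ \cite[Proposition~3.2]{Cochran-Orr-Teichner:2002-1} together with $L^2$-induction \cite[Proposition~5.13]{Cochran-Orr-Teichner:1999-1} gives
\[
\rhot(E_{L_i} \cup_\partial -E_{L_j},\phi) = \rhot(E_L \cup_\partial -E_L,\varphi) + \rhot(J_i,\Z_p) - \rhot(J_j,\Z_p)
\]
for an induced $\varphi\colon \pi_1(E_L \cup_\partial -E_L) \to \Gamma$. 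Combining with the vanishing and the Cheeger--Gromov bound $|\rhot(E_L\cup_\partial -E_L,\varphi)| < R$ forces $\big|\,|\rhot(J_i,\Z_p)| - |\rhot(J_j,\Z_p)|\,\big| < R$, which by the inductive choice of the $J_i$ yields $i = j$. Hence $E_{L_i}$ and $E_{L_j}$ are not $2$-solvably cobordant for $i \ne j$, and in particular $L_i$ and $L_j$ are not concordant.

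The main obstacle is the second paragraph: constructing the solvable target $\Gamma$ so that (a) it genuinely satisfies the torsion-free-abelian-by-($p$-group) hypothesis of Theorem~\ref{theorem:amenable-signature-theorem}, (b) $\alpha$ still maps to an element of order exactly $p$ — this is precisely where one needs $\Bl_L(a,a) \ne 0$, since a nonzero self-pairing is what certifies that $a$ survives in a suitable quotient of the torsion module rather than dying, exactly the role played in the introduction's Borromean-rings cautionary remark — and (c) the representation extends over any $2$-solvable cobordism $W$, which requires the Blanchfield-form analogue of Dwyer's theorem for $1$-solvable cobordisms. Getting all three simultaneously, and in particular checking that passing to a finite $p$-group quotient of the module does not destroy the nonvanishing of the pairing on $a$, is the technical heart of the argument.
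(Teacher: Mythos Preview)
Your overall architecture---satellite construction, Amenable Signature Theorem, Cheeger--Gromov bound, additivity of $\rhot$---matches the paper's. The gap is exactly the obstacle you flag as the ``technical heart'': there is no solvable analogue of Dwyer's theorem that lets you extend a representation into a \emph{fixed} metabelian $p$-group target (built from the link data) over an arbitrary $2$-solvable cobordism~$W$. Unlike lower-central-series quotients, the derived-series quotients of $\pi_1(E_L)$ and $\pi_1(W)$ are generally not isomorphic, and Blanchfield duality does not manufacture such an isomorphism; it only constrains a kernel.

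The paper reverses the logic. Instead of defining $\Gamma$ from the link and then extending over $W$, it sets $\Gamma := \pi_1(W)/\cP^2\pi_1(W)$ using the \emph{rational} derived series of $\pi=\pi_1(W)$; extension is then tautological, and $\Gamma' = \cP^1\pi/\cP^2\pi$ is \emph{torsion-free abelian}, so Theorem~\ref{theorem:amenable-signature-theorem} applies via its torsion-free option rather than the $p$-group option. The Blanchfield form enters only to certify \emph{nontriviality} of the image of a satellite curve: by Theorem~\ref{theorem:blanchfield-self-annilator} (from \cite{Cha:2012-1}, after \cite{Cochran-Orr-Teichner:1999-1,Cochran-Harvey-Leidy:2009-01}), the kernel $P=\Ker\{tH_1(E_{L_i};\Lambda)\to tH_1(W;\Lambda)\}$ satisfies $\Bl_L(P,P)=0$; since $\Bl_L\not\equiv 0$, $P$ is proper, so some generator survives and hence has infinite order in~$\Gamma'$. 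Two knock-on differences from your outline: the paper ties the knot $J_i$ along an entire generating set $\alpha_1,\ldots,\alpha_N$ of $tH_1(E_L;\Lambda)$ rather than a single curve (which $\alpha_k$ survives depends on~$W$, and your symmetrization step to force $\Bl_L(a,a)\ne 0$ for a single $a$ is not obviously valid over~$Q/\Lambda$); and the knot contribution is $\rhot(J_i,\Z)=\int\sigma_{J_i}(\omega)\,d\omega$ rather than $\rhot(J_i,\Z_p)$, with the gap inequality adjusted to $|\rhot(J_i)| > C + N|\rhot(J_j)|$ to absorb up to $N$ surviving terms on each side.
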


Before proving
Theorem~\ref{theorem:main-result-for-nonzero-blanchfield}, we observe
a special case to which
Theorem~\ref{theorem:main-result-for-nonzero-blanchfield} applies.

\begin{lemma}
  \label{lemma:2-comp-link-with-nonzero-blanchfield}
  Suppose $L$ is a 2-component link with $\lk(L)\ne 0$ and
  $\Delta_L\ne 1$.  Then the Blanchfield pairing $\Bl_L$ is not
  constantly zero.
\end{lemma}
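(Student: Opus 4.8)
The plan is to analyze the structure of the rational Blanchfield form for a $2$-component link $L$ with $\lk(L) \ne 0$ and to show that the hypothesis $\Delta_L \ne 1$ forces it to be nonzero. First I would recall that for such a link the link module $H_1(E_L; \Lambda)$ has a well-understood structure: because $\lk(L) \ne 0$, the total linking number homomorphism $\Z^2 \to \Z$ sending both $x_1, x_2 \mapsto t$ is nontrivial on the relevant part, and one knows (see e.g.\ Hillman) that the link module is a torsion $\Lambda$-module, i.e.\ $tH_1(E_L;\Lambda) = H_1(E_L;\Lambda)$, with order the (first) Alexander polynomial $\Delta_L$ up to units. The condition $\Delta_L \ne 1$ then says precisely that this torsion module is nonzero.

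The key step is to pass from ``the module is nonzero'' to ``the pairing is nonzero.'' The rational Blanchfield form $\Bl_L \colon tH_1(E_L;\Lambda) \times tH_1(E_L;\Lambda) \to Q/\Lambda$ is \emph{nonsingular} over $\Lambda = \Q[\Z^2]$: this is the standard rational Blanchfield duality coming from Poincar\'e--Lefschetz duality for $(E_L, \partial E_L)$ together with the fact that $\Lambda$ is a Noetherian UFD and the module is $\Lambda$-torsion (in the $2$-variable case one must be slightly careful, but the rationalized pairing on the torsion module is nonsingular — this is exactly why the authors work with $\Bl_L$ rather than the integral form). Nonsingularity means the adjoint $tH_1(E_L;\Lambda) \to \overline{\Hom_\Lambda(tH_1(E_L;\Lambda), Q/\Lambda)}$ is an isomorphism; in particular, if the module is nonzero then the adjoint is nonzero, so there exist $x, y$ with $\Bl_L(x,y) \ne 0$. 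This gives the conclusion.

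More concretely, I would argue as follows: suppose for contradiction that $\Bl_L \equiv 0$. Then the adjoint map is zero, hence by nonsingularity $tH_1(E_L;\Lambda) = 0$. But $tH_1(E_L;\Lambda) = H_1(E_L;\Lambda)$ since the module is $\Lambda$-torsion (using $\lk(L) \ne 0$), and the order ideal of a zero module is all of $\Lambda$, which would force $\Delta_L \doteq 1$, contradicting the hypothesis. Therefore $\Bl_L$ is not constantly zero.

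The main obstacle is justifying nonsingularity of the \emph{rational} Blanchfield form in the multivariable setting: over $\Z[\Z^m]$ with $m \ge 2$ the classical Blanchfield form can be singular or only defined on a quotient, because $\Z[\Z^m]$ has global dimension $> 2$ and the relevant $\Ext$-groups need not vanish. The point is that after tensoring with $\Q$ one works over $\Q[\Z^m]$, and combined with localization arguments (inverting the augmentation-type elements coming from $\lk(L) \ne 0$, which makes the module supported away from the ``bad'' locus) one recovers a nonsingular pairing on the torsion submodule; this is precisely the content of the rational Blanchfield duality statements in Hillman's book, Section~2.3, which the paper has already cited and which we are entitled to invoke. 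Once that is in hand, the deduction above is immediate.
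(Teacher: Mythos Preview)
Your proposal is correct and follows essentially the same approach as the paper: use $\lk(L)\ne 0$ to get nondegeneracy of the Blanchfield pairing, then use $\Delta_L\ne 1$ to show the (torsion) link module is nonzero. The paper makes the nondegeneracy step precise by citing Levine~\cite[Theorem~B]{Levine:1982-1} rather than Hillman, and uses the Torres condition ($\Delta_L(1,1)=|\lk(L)|\ne 0$) to show the module is torsion and nonzero, which is the explicit form of the facts you invoke from Hillman.
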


\begin{proof}
  Since $\lk(L)\ne 0$, the Blanchfield pairing $\Bl_L$ on
  $H_1(E_{L};\Lambda)$ is nondegenerate by
  Levine~\cite[Theorem~B]{Levine:1982-1}.  Therefore it suffices to
  show that $H_1(E_{L};\Lambda)$ is nonzero.

  Recall that the Torres condition (see
  e.g.~\cite[Section~5.1]{Hillman:2002-1}) implies that, up to
  multiplication by a monomial, the following equality holds:
  \[
  \Delta_L(x_1,1)=(x_1^{|\lk(L)|-1}+\cdots+x_1+1)\Delta_{K}(x_1).
  \]
  Here $K$ is the first component of $L$. In particular we see that
  $\Delta_L(1,1)=|\lk(L)|$.  Our assumptions that $\lk(L)\ne 0$ and
  $\Delta_L\ne 1$ now immediately imply that $\Delta_L$ is not a
  monomial.  It follows that $H_1(E_{L};\Lambda)$ is nonzero.
\end{proof}

From Theorem~\ref{theorem:main-result-for-nonzero-blanchfield},
Lemma~\ref{lemma:2-comp-link-with-nonzero-blanchfield} and the
discussions in Section~\ref{section:observations-on-main-theorem}, it
follows that Theorem~\ref{maintheorem:full} holds for 2-component
links with nonzero linking number and with $\Delta_L\ne 1$.  This,
combined with Theorem~\ref{theorem:main-result-for-nontrivial-lcs} and
Lemma~\ref{lemma:rank-of-3rd-lower-central-quotient}, completes the
proof of Theorem~\ref{maintheorem:full}, modulo the proof of
Theorem~\ref{theorem:main-result-for-nonzero-blanchfield} which is
given below.

\begin{proof}[Proof of
  Theorem~\ref{theorem:main-result-for-nonzero-blanchfield}]
  We choose simple closed curves $\alpha_1,\dots,\alpha_N$ in $E_{L}$
  which are unknotted in $S^3$ and have linking number zero with $L$
  such that their classes $[\alpha_k]$ generate $tH_1(E_{L};\Lambda)$,
  which is a finitely generated module since $\Lambda$ is Noetherian.
  For each $i=1,2,\ldots,$ we use the satellite construction to
  produce a new link $L_i = L(\{\alpha_k\},\{J_{ik}\})$ by tying a
  collection of knots $\{J_{ik}\}_{k=1}^{N}$ into $L$ along the
  curves~$\alpha_k$, for $k=1,\dots,N$.  (See the proof of
  Theorem~\ref{theorem:main-result-for-nontrivial-lcs} for a more
  detailed description of the satellite construction.)  We define
  $J_{0k}$ to be the trivial knot for each $k$, so that
  $L=L_0=L(\{\alpha_k\}, \{J_{0k}\})$ is also described in the same
  way.  For $i\ge 1$ we choose the knots $J_{ik}$ as follows.  As made
  use of in the proof of Theorem
  \ref{theorem:main-result-for-nontrivial-lcs}, due to
  Cheeger-Gromov~\cite[p.~23]{Cheeger-Gromov:1985-1}, there is a
  constant $C > 0$ determined by the 3-manifold $E_{L} \cup_\partial
  -E_L$ such that $|\rhot(E_{L} \cup_\partial -E_L,\varphi)| < C$ for
  any representation~$\varphi$.  For a knot $K$, we define
  $\rhot(K)=\int \sigma_K(\omega) \, d\omega$, the integral of the
  Levine-Tristram signature function $\sigma_K(\omega)$ over the unit
  circle normalized to length one.  For example, an elementary
  calculation shows that if $K$ is the trefoil, then
  $\rhot(K)=\frac{4}{3}$ (see
  e.g.~\cite{Cochran-Orr-Teichner:1999-1}).  We choose knots $J_{i}$
  inductively for $i=1,2,\ldots$ in such a way that the inequality
  \begin{equation}
    \label{equation:defining-condition-of-J_i}
    |\rhot(J_i)| > C + N|\rhot(J_j)|
  \end{equation}
  is satisfied whenever $i>j$; recall that $N$ is the number of
  satellite curves~$\a_k$.  Once again, the connected sum of a
  sufficiently large number of trefoils can be taken as~$J_i$.  Let
  $J_{ik}=J_i$ for every~$k$.

  As we observed in the proof of
  Theorem~\ref{theorem:main-result-for-nontrivial-lcs}, there is an
  integral homology equivalence $f\colon (E_{L_i},\partial E_{L_i})
  \to (E_L,\partial E_L)$.  Since each $\alpha_k$ lies in
  $[\pi_1(E_L),\pi_1(E_L)]$, a Mayer-Vietoris argument applied to our
  satellite construction shows that $f$ induces isomorphisms on
  $H_*(-;\Z[\Z^m])$.

  Let $\alpha_{ik}\subset E_{L\cup \alpha}\subset E_{L_i}$ be a
  push-off of $\alpha_k$ along the zero-framing.  By the above, the
  $[\alpha_{ik}]$ generate $H_1(E_{L_i};\Lambda) \cong
  H_1(E_{L};\Lambda)$.  As we discussed in
  Section~\ref{section:observations-on-main-theorem}, $\Bl_{L_i} \cong
  \Bl_L$ under the map induced by $f$ since $f$ is a
  $\Q[\Z^m]$-homology isomorphism.

  Suppose $W$ is a 2-solvable cobordism between $E_{L_i}$ and
  $E_{L_j}$ for some $i\geq j$. We will show that this implies that
  $i=j$.  We need the following fact, which is obtained immediately by
  combining \cite[Theorem~4.12]{Cha:2012-1} with
  \cite[Corollary~4.14]{Cha:2012-1}, the proofs of which relied on
  arguments due
  to~\cite[Theorem~4.4~and~Lemma~4.5]{Cochran-Orr-Teichner:1999-1}
  and~\cite[Lemma~5.10]{Cochran-Harvey-Leidy:2009-01}.

  \begin{theorem}[\cite{Cha:2012-1}]
    \label{theorem:blanchfield-self-annilator}
    Suppose that $W$ is a 1-solvable cobordism between link exteriors
    $E_{L}$ and~$E_{L'}$.  Then the submodule
    $P=\Ker\{tH_1(E_L;\Lambda)\to tH_1(W;\Lambda)\subset
    H_1(W;\Lambda)\}$ satisfies $\Bl_L(P,P)=0$.
  \end{theorem}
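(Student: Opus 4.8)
This is an instance of the classical \emph{half-lives-half-dies} phenomenon: only the ``half dies'' direction, that the kernel $P$ is isotropic for the linking form, is asserted, and that is the elementary half. My plan is to express $\Bl_L$ through Poincar\'e--Lefschetz duality of $(E_L,\partial E_L)$ over $\Lambda=\Q[\Z^m]$, transport the resulting intersection computation into the cobordism $W$, and then kill it using the lagrangian-with-duals furnished by $1$-solvability. Write $\Sigma=m(S^1\times S^1)$ and $Y=\partial W=E_L\cup_\Sigma -E_{L'}$; since part (i) of Definition~\ref{definition:solvable-cobordism} gives $H_1(E_L)\cong H_1(W)\cong H_1(E_{L'})$, the abelianization $\pi_1(W)\to\Z^m$ supplies $\Lambda$-coefficients on $W$ and on $Y$.

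For $x,y\in P$ I would argue as follows. Choose $r\in\Lambda\setminus\{0\}$ with $rx=0$ and a $\Lambda$-$2$-chain $u$ in $E_L$ with $\partial u=r\widetilde x$; by definition $\Bl_L(x,y)=\tfrac1r(u\cdot\widetilde y)\in Q/\Lambda$, the intersection being taken in $E_L$. As $x,y$ die in $H_1(W;\Lambda)$, pick $\Lambda$-$2$-chains $v,w$ in $W$ with $\partial v=\widetilde x$ and $\partial w=\widetilde y$; pushing $u$ off $Y$, the chain $Z=u-rv$ is a $\Lambda$-$2$-cycle in $W$, and a chain-level manipulation identifies
\[
\Bl_L(x,y)=\tfrac1r\,\langle[Z],[w]\rangle\bmod\Lambda,
\]
where $\langle\,,\rangle\colon H_2(W;\Lambda)\times H_2(W,Y;\Lambda)\to\Lambda$ is the Poincar\'e--Lefschetz pairing. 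Because $[w]$ has $\Lambda$-torsion boundary in $H_1(Y;\Lambda)$, a multiple of it is the $j$-image of an absolute class, so $\langle[Z],[w]\rangle$ is governed by the intersection form $\lambda_1$ on $H_2(W;\Lambda)$; expanding both inputs in the lagrangian $\ell_i$ and its duals $d_j$ (which exist over $\Z[\pi/\pi^{(1)}]=\Z[\Z^m]$ by $1$-solvability, hence over $\Lambda$ after $\otimes\Q$), and using that $\Im(H_2(Y;\Lambda)\to H_2(W;\Lambda))$ lies in the radical of $\lambda_1$, pushes the pairing into $\Lambda$ and so $\Bl_L(x,y)=0$ in $Q/\Lambda$. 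For the delicate bookkeeping underlying this I would ultimately appeal to \cite[Theorem~4.12 and Corollary~4.14]{Cha:2012-1}, whose proofs rest on \cite[Theorem~4.4 and Lemma~4.5]{Cochran-Orr-Teichner:1999-1} and \cite[Lemma~5.10]{Cochran-Harvey-Leidy:2009-01}.

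The step I expect to be the main obstacle is the identification displayed above: matching the \emph{intrinsically} defined Blanchfield form of $L$ with the pairing coming from duality on $W$. In the classical knot-slicing setup one bounds $W$ by a \emph{closed} $3$-manifold (zero-surgery), whose $H_1$ carries an honest linking form induced by $W$; here the boundary $Y=E_L\cup_\Sigma-E_{L'}$ is a union of two link exteriors glued along the $m$ tori $\Sigma$, and one must check that the portion of $tH_1(E_L;\Lambda)$ seen by $W$ — and the form it carries — is genuinely governed by $\Bl_L$, while verifying that the torsion contributions of $\Sigma$ and of $E_{L'}$ do not interfere. This is precisely the content of \cite[\S4]{Cha:2012-1}.
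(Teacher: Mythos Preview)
The paper does not give its own proof of this statement: it is quoted as a result from \cite{Cha:2012-1}, with the remark that it ``is obtained immediately by combining \cite[Theorem~4.12]{Cha:2012-1} with \cite[Corollary~4.14]{Cha:2012-1}, the proofs of which relied on arguments due to \cite[Theorem~4.4 and Lemma~4.5]{Cochran-Orr-Teichner:1999-1} and \cite[Lemma~5.10]{Cochran-Harvey-Leidy:2009-01}.'' Your proposal ends by citing exactly these sources, so at the level of attribution you and the paper agree.

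What you add beyond the paper is a sketch of the argument behind those references, and that sketch is along the right lines: express $\Bl_L$ via duality, push the computation into $W$ using $2$-chains bounding the torsion cycles, and then use the lagrangian-with-duals over $\Z[\pi/\pi^{(1)}]=\Z[\Z^m]$ to control the resulting intersection number. You also correctly isolate the genuinely new difficulty in the bordered setting, namely that $\partial W=E_L\cup_\Sigma -E_{L'}$ is not a closed $3$-manifold and one must verify that the intrinsic Blanchfield form on $tH_1(E_L;\Lambda)$ really matches the linking pairing induced by the $4$-manifold duality of $W$, with the contributions from $\Sigma$ and $E_{L'}$ not interfering; this is exactly what \cite[\S4]{Cha:2012-1} sets up.

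One caution: your paragraph ``expanding both inputs in the lagrangian $\ell_i$ and its duals $d_j$ \ldots\ pushes the pairing into $\Lambda$'' is the place where the argument is least complete. The $\ell_i,d_j$ are only required to generate $H_2(W;\Z)$, not $H_2(W;\Lambda)$, and the intersection number $\langle[Z],[w]\rangle$ already lies in $\Lambda$ automatically; the real issue is divisibility by $r$. The actual mechanism in \cite{Cochran-Orr-Teichner:1999-1} and \cite{Cha:2012-1} runs through a careful analysis of the map $TH_2(W,\partial W;\Lambda)\to TH_1(\partial W;\Lambda)$ and the self-annihilating submodule it produces, rather than a literal basis expansion. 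Since you explicitly defer the ``delicate bookkeeping'' to those references, this is not a gap in your proposal so much as a point where your heuristic description does not quite match the shape of the cited proof.
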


  In our case, from Theorem~\ref{theorem:blanchfield-self-annilator}
  and the hypothesis that $\Bl_L\cong \Bl_{L_i}$ is not constantly
  zero, it follows that $P=\Ker\{tH_1(E_{L_i};\Lambda)\to
  tH_1(W;\Lambda)\}$ is not equal to $tH_1(E_{L_i};\Lambda)$.  That
  is, $[\alpha_{ik}]\not\in P$ for some~$k$.

  For a group $G$, we now denote by $\{\cP^n G\}$ the
  rational derived
  series, namely $\cP^0 G :=G$ and
  \[
  \cP^{n+1}G := \Ker\Big\{\cP^n G \to \frac{\cP^n G}{[\cP_n G,\cP_n G]}
  \otimesover{\Z} \Q \Big\}.
  \]
  Then for $\pi= \pi_1(W)$, $\cP^1\pi$ is the ordinary commutator
  subgroup $\pi^{(1)}=[\pi,\pi]$ since the abelian group
  $\pi/[\pi,\pi]$ is torsion free.  Also, $\cP^1\pi/\cP^2\pi$ is the
  quotient of $\pi^{(1)}/\pi^{(2)} = H_1(W;\Z[\Z^m])$ by its
  $\Z$-torsion subgroup.

  Let $\phi\colon \pi\to \Gamma := \pi/\cP^2\pi$ be the projection,
  and by abuse of notation, we denote its restriction to $\partial
  W=E_{L_i} \cup_\partial -E_{L_j}$ by $\phi$ as well.  Since $\cP^1
  \pi/\cP^2\pi = H_1(W;\Z[\Z^2])/\text{($\Z$-torsion)}$ injects into
  $H_1(W;\Lambda)$ and the image of $[\alpha_{ik}]$ in
  $H_1(W;\Lambda)$ is nontrivial for some $k$, it follows that
  $\phi([\alpha_{ik}])$ is nontrivial for some~$k$.  Furthermore,
  $\phi([\alpha_{ik}])$ has infinite order, since
  $\phi([\alpha_{ik}])$ lies in $\cP^1\pi/\cP^2\pi$ which is a
  torsion-free abelian group.  Similarly $\phi([\alpha_{jk}])$ has
  infinite order for some (possibly different)~$k$.

  Theorem~\ref{theorem:amenable-signature-theorem} applies to the
  group $\Gamma$, with $\Gamma' = \cP^1\pi/\cP^2\pi$.  We thus deduce
  that $\rhot(E_{L_i} \cup_\partial -E_{L_j}, \phi)=0$.

  The homomorphism $\phi$ on $\pi_1(E_{L_i}\cup_\partial -E_{L_j})$
  gives rise to a homomorphism $\varphi \colon
  \pi_1(E_{L}\cup_\partial -E_{L}) \to \Gamma$, which is defined in a
  very similar way to the map which was also called $\varphi$ in the
  proof of Theorem \ref{theorem:main-result-for-nontrivial-lcs}: the
  homomorphism $\phi$ restricted to $\pi_1(E_{J_{ik}})$ sends the
  meridian of $J_{ik}$ to~$\phi([\alpha_{ik}])$.  Since
  $\phi([\alpha_{ik}])$ has infinite order in the abelian subgroup
  $\cP^1\pi/\cP^2\pi$ of $\Gamma$, it follows that $\phi$ restricted
  to $E_{J_{ik}}$ factors as $\pi_1(E_{J_i}) \to \Z \hookrightarrow
  \Gamma$ where the first map is the abelianization.  Similarly
  for~$J_{jk}$.  It follows that $\phi$ on $\pi_1(E_{L_i}\cup_\partial
  -E_{L_j})$ gives rise to a homomorphism of $\pi_1(E_{L}\cup_\partial
  -E_{L})$, say~$\varphi$.

  By the above and by the same argument as in the proof of
  Theorem~\ref{theorem:main-result-for-nontrivial-lcs} we obtain that
  \begin{equation}
    \label{equation:blanchfield-case-rho-formula}
    \begin{aligned}
      0 & = \rhot(E_{L_i} \cup_\partial -E_{L_j},\phi)
      \\
      & =\rhot(E_{L} \cup_\partial -E_L,\varphi) +
      \sum_{k=1}^N\epsilon_{ik}\rhot(J_{ik},\psi) -
      \sum_{k=1}^N\epsilon_{jk}\rhot(J_{jk},\psi)
    \end{aligned}
  \end{equation}
  where $\psi$ denotes the abelianization epimorphism of a knot group
  onto $\Z$ and where $\epsilon_{ik}$ is $0$ if $\phi(\alpha_{ik})$ is
  trivial, $1$ otherwise, and $\epsilon_{ij}$ is defined similarly.
  Furthermore, by \cite[Proposition~5.1]{Cochran-Orr-Teichner:2002-1}
  we know that for any knot $K$ the invariant $\rhot(K,\psi)$ is equal
  to the integral of the Levine-Tristram signature function.  Since
  $\epsilon_{ik}=1$ for some $k$ (and similarly for $\epsilon_{jk}$)
  and our $J_i$ are chosen so that the
  inequality~\eqref{equation:defining-condition-of-J_i} is satisfied
  whenever $i>j$, the
  equality~\eqref{equation:blanchfield-case-rho-formula} can only be
  satisfied if $i=j$.  From this it follows that there is no
  2-solvable cobordism between $E_{L_i}$ and~$E_{L_j}$ whenever $i\ne
  j$.
\end{proof}

\section{Satellite construction and asymmetric Whitney towers}
\label{section:satellite-and-asymmetric-Whitney-towers}

In this section we observe that our links $L_i$ in
Theorem~\ref{maintheorem:full} can be assumed to be mutually order $n$
Whitney tower/grope concordant for any $n$.  For the definition of
order $n$ Whitney tower concordance of framed links, see
\cite[Definition~3.2]{Conant-Schneiderman-Teichner:2012-2}; in this
section we assume that links are always endowed with the zero-framing.

Since we constructed $L_i$ by satellite construction on a given link $L$
using some knots which are only required to have sufficiently large
integral (or average of finitely many evaluations) of the
Levine-Tristram signature, the claim follows immediately from the
following lemma:

\begin{lemma}\label{lemma:asymmetric-Whitney-towers}
  Suppose $K$ is a knot in $S^3$ with vanishing Arf invariant, $L$ is
  a link in $S^3$, and $\alpha$ is a simple closed curve in $S^3-L$
  which is unknotted in~$S^3$.  Then the link $L'=L(\alpha,K)$
  obtained by the satellite construction is order $n$ Whitney
  tower/grope concordant to $L$ for any~$n$.
\end{lemma}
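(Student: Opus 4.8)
The plan is to realize the satellite $L' = L(\alpha, K)$ as the result of a single "Whitney-tower-compatible" modification of $L$ localized near $\alpha$, and to feed the vanishing of $\Arf(K)$ into the standard machinery that produces high-order Whitney towers from $\Arf$-trivial knots. First I would recall that since $\alpha$ is unknotted in $S^3$, the satellite construction $L \mapsto L(\alpha, K)$ amounts to cutting out a trivially embedded solid torus neighborhood of $\alpha$ and regluing $E_K$; equivalently, inside a $3$-ball $B$ meeting $L$ in a trivial tangle dictated by how $\alpha$ links $L$, we replace a trivial string-link pattern by the corresponding "infection" pattern built from $K$. The point is that the difference between $L$ and $L'$ is supported in a $4$-dimensional product region $B \times [0,1]$, where it is governed entirely by the knot $K$ sitting on the curve $\alpha$, which bounds an immersed disk in $S^3 - L$ (indeed an embedded disk, since $\alpha$ is unknotted and has linking number accounted for by the pattern).

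Next I would produce the annular concordance-like object: build immersed annuli from $L \times \{0\}$ to $L' \times \{1\}$ in $S^3 \times [0,1]$ that agree with a product annulus outside $B \times [0,1]$, and inside $B \times [0,1]$ realize the infection by a standard cobordism. The failure of these annuli to be embedded is measured by intersections that can be pushed off onto a single "infection torus/disk" carrying $K$. At this stage the key input is the classical fact — due in this circle of ideas to Cochran–Orr–Teichner and refined by Conant–Schneiderman–Teichner — that a knot $K$ with $\Arf(K) = 0$ is \emph{order $n$ Whitney tower/grope concordant to the unknot for every $n$}: $\Arf$ is precisely the order $1$ (equivalently the only finite-type $\Z/2$) obstruction to capping off, and once it vanishes one can iteratively find Whitney disks of arbitrarily high order for the self-intersections, using that all higher obstruction groups in the relevant filtration that could obstruct are detected by invariants vanishing on $\Arf$-trivial knots. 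I would then graft this tower for $K$ (rel unknot) into the infection region of the annuli for $L$ versus $L'$, so that the only self-intersections of the base annuli are paired by Whitney disks inherited from the $K$-tower, yielding an order $n$ Whitney tower concordance between $L$ and $L'$ for each $n$; the grope statement follows by the standard grope/Whitney-tower translation in this range (as in \cite{Conant-Teichner-Schneiderman:2011-1}).

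The main obstacle I anticipate is bookkeeping the \emph{order} correctly: one must check that the satellite/infection move contributes no low-order obstruction of its own, i.e., that the only obstructions come from $K$ and not from how $\alpha$ sits relative to $L$. This is where "$\alpha$ unknotted in $S^3$" and "$\alpha$ a simple closed curve in $S^3 - L$" are used — $\alpha$ bounds a disk in $S^3$ whose interior may hit $L$, but the intersections with $L$ only affect the pattern, not the self-intersections of the concordance annuli, so they do not degrade the order. Concretely I would argue that in the universal cover (or with respect to the lower central / Whitney-tower filtration) the infection move is "order $\infty$ relative to the curve data," so that the order of the resulting tower is exactly the order of the best tower for $K$ relative to the unknot, which is $\infty$ when $\Arf(K)=0$. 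Finally I would note that since each $J_i$ (a connected sum of trefoils) has $\Arf(\text{trefoil}) = 1$, one should instead observe that the $J_i$ used to \emph{realize the $\rho^{(2)}$ inequalities} can be replaced, without changing the relevant signature integrals mod the needed precision, by $\Arf$-trivial knots with the same large Levine–Tristram signature integral (e.g.\ connected sums of a genus-one algebraically slice knot with large signature, or of pairs of trefoils so that $\Arf$ cancels); Lemma~\ref{lemma:asymmetric-Whitney-towers} then applies to each infection curve $\alpha_k$ in turn, and composing the resulting order $n$ Whitney tower/grope concordances over all $k$ shows each $L_i$ is order $n$ Whitney tower/grope concordant to $L = L_0$, hence the $L_i$ are mutually so for every $n$.
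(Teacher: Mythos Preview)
Your proposal is correct and follows essentially the same route as the paper: both localize the difference between $L$ and $L'$ to the infection region and feed in the fact that $\Arf(K)=0$ lets $K$ bound a framed embedded grope (equivalently, support a Whitney tower) of arbitrary order in $D^4$, the paper citing Schneiderman~\cite{Schneiderman:2005-1} for this and then invoking the grope/Whitney-tower equivalence from~\cite{Schneiderman:2006-1}. The paper's version is simply more explicit about your ``grafting'' step: it chooses a disk $D$ bounded by $\alpha$ meeting $L$ in $r$ points, takes $r$ framed parallel copies of a grope for $K$ in $D^4$, and splices these into the product $L\times[0,1]$ in place of a collar $(U,U\cap L)\times[0,\epsilon)$ of a neighborhood of $D$---this parallel-copies detail (one copy per strand of $L$ passing through $D$) is the one point your sketch leaves implicit.
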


For example, in the construction of our examples above, take an even
number of trefoils for $J_{i}$.  Note that asymmetric Whitney
tower/grope concordance contains no information even when we use
representations to nilpotent groups to obstruct symmetric Whitney
tower concordance.

We remark that in \cite{Schneiderman:2006-1}, Schneiderman showed that
the Whitney tower and grope concordance are equivalent in the
asymmetric case.  Therefore it suffices to show our results for grope
concordance.  A brief outline of the proof is: $K$ bounds an order $n$
grope in $D^4$ since $\Arf(K)=0$, and then a ``boundary connected
sum'' of parallel copies of this grope and the product concordance
from $L$ to $L$ becomes an order $n$ grope concordance between $L$
and~$L'$.  The details are spelled out below.

\begin{proof}[Proof of Lemma \ref{lemma:asymmetric-Whitney-towers}]
  We begin with a well-known description of the satellite
  construction.  Choose an embedded 2-disk $D$ in $S^3$ which is
  bounded by $\alpha$ and meets $L$ transversely.  Choose an open
  regular neighborhood $U$ of $D$ in $S^3$ for which $(U,U\cap L)$ is
  a trivial $r$-string link where $r=|D\cap L|$.  For the knot $K$,
  take the union $Y$ of $r$ parallel copies of $K$ and take an open
  regular neighborhood $V$ of a 2-disk fiber of the normal bundle of
  $K$ such that $(V,V\cap Y)$ is a trivial $r$-string link.  There is
  an orientation reversing homeomorphism $h\colon (U,U\cap L) \to
  (V,V\cap Y)$ such that
  \[
  (S^3,L') = \Big(\big((S^3,L)- (U,U\cap L)\big) \cup
  \big((S^3,Y)-(V,V\cap Y)\big)\Big)\Big/x\sim h(x)\text{ for }
  x\in \partial U.
  \]
  Here components of $Y-V$ are oriented according to the sign of the
  intersection points in~$D\cap L$.

  From our assumption that $\Arf(K)=0$ and the result
  in~\cite{Schneiderman:2005-1} that the Arf invariant is the only
  obstruction to a knot bounding a framed embedded grope of arbitrary
  order, it follows that there is a framed embedded grope of order $n$
  in $D^4$ bounded by~$K$ for any~$n$.  Taking $r$ parallel copies of
  the grope (and orienting the base surfaces according to the sign of
  the intersection points in~$D\cap L$), we obtain a framed embedded
  grope $G$ bounded by~$Y$.

  Identify a collar neighborhood of $S^3$ in $D^4$ with
  $S^3\times[0,\epsilon]$.  We may assume that $V\times [0,\epsilon]$
  intersects (the base surface of) $G$ in $(V\cap Y)\times
  [0,\epsilon]$.  Now, define $G'\subset S^3\times [0,1]$ by forming
  the union
  \begin{align*}
    (S^3\times[0,1],G') &= \Big(\big((S^3,L)\times[0,1]-(U,U\cap
    L)\times[0,\epsilon)\big)
    \\
    &\hphantom{=}\qquad \cup \big((D^4,G)-(V,V\cap
    Y)\times[0,\epsilon)\big)\Big)\Big/\sim
  \end{align*}
  where $(x,t)\sim h(x,t)$ for $(x,t)\in(\partial U\times[0,\epsilon])
  \cup (U\times\{\epsilon\})$.  Then $G'$ is a grope concordance of
  order $n$ cobounded by $L'\times 0$ and $L\times 1$.
\end{proof}

\bibliographystyle{amsalpha} \renewcommand{\MR}[1]{}

\bibliography{research}

\end{document}